%% file: minimum_blockers.tex
\documentclass[11pt]{article}
\usepackage[T1]{fontenc}
\usepackage[utf8]{inputenc}
\usepackage[margin=1in]{geometry}
\usepackage{lmodern,amsmath,amssymb,amsthm,mathtools,microtype}
\usepackage{graphicx,tikz,enumitem,float}
\usepackage{algorithm,algpseudocode}
\usepackage[colorlinks=true,linkcolor=blue!45!black,citecolor=blue!45!black,urlcolor=blue!45!black]{hyperref}
\usepackage{orcidlink}
\usetikzlibrary{arrows.meta,graphs}
\newtheorem{theorem}{Theorem}[section]
\newtheorem{lemma}[theorem]{Lemma}
\newtheorem{proposition}[theorem]{Proposition}
\newtheorem{corollary}[theorem]{Corollary}
\theoremstyle{remark}

\newcommand{\NN}{\mathcal N_k}
\newcommand{\PP}{\mathcal P_k}
\newcommand{\BB}{\mathcal B_k}

\hypersetup{pdftitle={Minimum blockers for nonnested perfect matchings},pdfauthor={Pedro M. M. de Castro},pdfsubject={Ordered matchings and minimum edge blockers},pdfkeywords={ordered graph, nonnested matching, blocker, pattern avoidance}}
\title{Minimum blockers for nonnested perfect matchings}
\author{Pedro M. M. de Castro\,\orcidlink{0000-0002-9470-7458}\\
\small Centro de Inform\'atica, Universidade Federal de Pernambuco\\
\small Recife, Pernambuco, Brazil\\
\small \texttt{pmmc@cin.ufpe.br}}
\date{September 13, 2026}
\begin{document}
\maketitle
\begin{abstract}
A perfect matching in an ordered graph is nonnested if no edge lies strictly
inside another. We classify the smallest edge sets meeting every nonnested
perfect matching on \(2k\) ordered vertices. For \(k\ge2\), these blockers have
\(k\) edges and belong to three explicit families, with \(2^k+k-2\) members
in total. The proof uses a minimum spanning tree of an auxiliary interval
cut; its equality case also classifies the minimum blockers for concatenations
of crossing matchings. The argument gives a deterministic algorithm that,
from at most \(k\) deleted edges, returns a minimum blocker description or
an avoiding nonnested perfect matching in \(O(k^2)\) word operations and
\(O(k)\) auxiliary words. We also give an injection from one of the blocker
families into minimum blockers of \(123\)-avoiding permutation matrices.
Beyond the perfect case, an explicit construction gives graphs with
\((k-1)n+1\) edges and no nonnested \(k\)-matching for every
\(k\ge5\) and \(n\ge2k+1\).
\end{abstract}

\medskip
\noindent\textbf{AI-use disclosure.}
OpenAI's ChatGPT and Codex were used as research tools to assist the author with literature retrieval, mathematical exploration, symbolic and numerical checks, and draft preparation and revision. The author formulated the research questions and mathematical framework, developed and refined the results and arguments with this assistance, and critically reviewed the manuscript throughout. Responsibility for the mathematical statements and the final manuscript rests entirely with the author.

\medskip

\noindent\textbf{Keywords:} ordered graph; nonnested matching; minimum blocker;
pattern-avoiding permutation.

\section{Introduction and statement of the result}\label{sec:intro}

An ordered graph is a graph with a fixed linear order on its vertices.
Two disjoint edges \(ab\) and \(cd\), with \(a<b\) and \(c<d\), are nested
if \(a<c<d<b\) or \(c<a<b<d\). Thus a nonnested matching permits both
crossing and separated edges. Its edges can be listed as
\[
x_1y_1,\ldots,x_ky_k,\qquad
x_1<\cdots<x_k,\quad y_1<\cdots<y_k,\quad x_i<y_i.
\]
All endpoints in these lists are distinct. Write \([n]=\{1,\ldots,n\}\)
and write \(ab\) for the unordered edge \(\{a,b\}\) when \(a<b\).
An integer interval \([a,b]=\{a,a+1,\ldots,b\}\) is empty when \(a>b\).

A \emph{blocker} of a family of matchings is a set of edges meeting every
member of the family. We determine all blockers of minimum cardinality for
the family \(\NN\) of nonnested perfect matchings on \([2k]\).
The main classification assumes exactly \(2k\) vertices and minimum
cardinality; inclusion-minimal blockers can have more edges.
This is also an equality question in extremal ordered graph theory:
the complements of minimum blockers are precisely the largest graphs
without a matching in \(\NN\).
More generally, let \(f_k(n)\) be the maximum number of edges in an
ordered graph on \([n]\) with no nonnested matching of size \(k\).

Nonnested and noncrossing perfect matchings are both classical Catalan
families; see Klazar~\cite[Section~5]{Klazar2000}.
Their incidence with the edges of a prescribed host graph leads to
different blocker problems. Keller and Perles classified minimum
noncrossing perfect-matching blockers in a convex geometric graph
as specified caterpillars, with \(k2^{k-1}\) choices~\cite{KP2012}.
They subsequently classified their minimum co-blockers~\cite{KP2013}.
For ordered nonnested matchings, Ramsey questions and extension
constructions were studied in~\cite{BGT2024}.
Bar\'at, Freschi and T\'oth consider the corresponding extremal
problem~\cite{BFT2025}. They give the bounds
\[
 (k-1)n\le f_k(n)\le (k-1)n+\binom{k-1}{2}
 \qquad(n\ge2k)
\]
and conjecture the equality \(f_k(n)=(k-1)n\) for every \(n\ge2k\)
\cite[Theorem~2.5 and Conjecture~2.6]{BFT2025}.
Their constructions in Section~2.2 include complements of endpoint fans
and some small examples in our classification, identified below.
Theorem~\ref{thm:main} determines the exact value \(f_k(2k)\),
characterizes every graph attaining it, and counts all equality cases.
Section~\ref{sec:beyond} shows that the proposed equality fails for every
\(k\ge5\) as soon as \(n\ge2k+1\), by an explicit construction.
The classification also yields a sharper upper bound with one additional
vertex. The exact value for arbitrary \(n>2k\) remains undetermined here.

A related matrix problem was studied by Brualdi and Cao~\cite{BC2022}.
Their classification~\cite[Theorem~2.15]{BC2023} describes all sets of
\(N\) positions meeting every \(123\)-avoiding permutation matrix of
order \(N\). A permutation selects one position in each row and column,
using two separately indexed sets. Here each vertex of a single ordered
set must occur in exactly one matching edge. We derive the complete
edge-blocker classification from interval cuts and a minimum spanning tree
equality argument. Section~\ref{sec:matrix} proves a specific connection:
an injection of the left family below into the minimum matrix blockers.
The comparison of minimum co-blockers in Section~\ref{sec:consequences}
and Section~\ref{sec:matrix} further describes the difference in incidence.

Here are the three families in the classification. The reflection
\(\rho(v)=2k+1-v\) reverses the order on \([2k]\). For an edge set \(D\),
write \(\rho(D)=\{\rho(b)\rho(a):ab\in D,\ a<b\}\).
For \(1\le m\le k\), take a weakly increasing sequence
\[
2\le a_{m+1}\le\cdots\le a_k\le m+1,
\]
allowing the empty sequence when \(m=k\), and put
\begin{equation}\label{eq:left}
L(m,a)=\{1i:2\le i\le m+1\}
 \cup\{a_j(a_j+j):m+1\le j\le k\}.
\end{equation}
For \(1\le m\le k-2\), put
\begin{equation}\label{eq:mixed}
C_m=\{1i:2\le i\le m+1\}
 \cup\{(m+1)(k+m+1)\}
 \cup\{j(2k):k+m+1\le j\le2k-1\}.
\end{equation}
The family of \(C_m\)'s is empty for \(k=2\).
Figure~\ref{fig:families} illustrates the three types.

\begin{theorem}\label{thm:main}
For \(k\ge2\), the minimum blockers of \(\NN\) have \(k\) edges and
are exactly
\[
 L(m,a),\qquad \rho(L(m,a)),\qquad C_m.
\]
These classes are disjoint and contain \(2^{k-1}\), \(2^{k-1}\) and \(k-2\)
members, respectively. Consequently their total number is
\begin{equation}\label{eq:count}
b_k=2^k+k-2.
\end{equation}
Equivalently, the largest ordered graphs on \([2k]\) without a nonnested
perfect matching have \(2k(k-1)\) edges, and are the complements of the
displayed blockers.
\end{theorem}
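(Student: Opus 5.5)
The plan follows the abstract's hint: a lower bound from interval cuts and a minimum spanning tree, then classification of the equality cases.

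\emph{Upper bound.} First I will check that each displayed set meets every matching in \(\NN\). Take \(M\in\NN\) with edges \(x_iy_i\) in the canonical listing. Vertex \(1\) must equal \(x_1\), and vertex \(2k\) must equal \(y_k\). For \(L(m,a)\), suppose \(M\) avoids all fan edges \(1i\) with \(i\le m+1\). Then \(y_1\ge m+2\), so \(x_1,\dots,x_{m+1}\le m+1<y_1\), which forces \(x_j=j\) for \(j\le m+1\). I will then show by induction on \(j\ge m+1\) that avoiding \(a_j(a_j+j)\) is impossible. The key observation is that in a nonnested matching, \(y_i-x_i\) relates to the number of left endpoints preceding \(y_i\). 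The weak monotonicity \(2\le a_{m+1}\le\dots\le a_k\le m+1\) is exactly what makes some pair \(x_iy_i\) land on \(a_j(a_j+j)\). For \(C_m\) I will combine this with the reflected argument at vertex \(2k\). This shows that \(k\) edges suffice, so the minimum is at most \(k\).

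\emph{Lower bound.} For each cut position \(t\), consider the edges of \(M\) crossing the gap between \(t\) and \(t+1\). In a nonnested matching these are determined by the structure of the \(x_i\) and \(y_i\). I will build an auxiliary graph \(H\) on the \(k+1\) vertices \(\{0,\dots,k\}\), which index the number of "open" edges. Each edge \(uv\) of the ordered graph is mapped to an edge of \(H\) (an interval cut) describing which nonnested matchings it belongs to. The guiding fact is that a nonnested perfect matching is determined by a lattice path, namely the Catalan structure of the \(x_i<y_i\) interleaving. An edge \(ab\) lies in \(M\) if and only if the path passes through a prescribed pair of states, so every \(M\in\NN\) contains \(ab\) for certain \(i\) with \(x_i=a,\ y_i=b\).

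The claim to prove is that a set \(D\) blocks \(\NN\) only if its image in \(H\) connects all \(k+1\) states. Otherwise, I will construct an avoiding matching using the component structure, by greedy choice of the \(x_i\) and \(y_i\) within one component. Connectivity needs at least \(k\) edges, which gives the lower bound. With weights chosen appropriately, a minimum spanning tree argument handles the case where one edge of \(D\) is used in several cuts.

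\emph{Equality case.} When \(|D|=k\), the image must be a spanning tree, and every edge of \(D\) must be "tight": it is the unique edge of \(D\) covering its cut. I will analyze the tree rooted at the states coming from vertex \(1\) or vertex \(2k\). If some edge at vertex \(1\) is present and no edge at \(2k\) is, the tightness conditions force the fan \(\{1i:i\le m+1\}\) followed by edges \(a_j(a_j+j)\) with monotone \(a_j\), which is \(L(m,a)\). The symmetric situation gives \(\rho(L(m,a))\). If both endpoints are used, the tree must switch exactly once, through the edge \((m+1)(k+m+1)\), which gives \(C_m\).

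\emph{Counting and disjointness.} The sequences \(a\) for fixed \(m\) are weakly increasing maps from \(k-m\) slots into \([2,m+1]\), numbering \(\binom{k-1}{k-m}\). Summing over \(m\) gives \(2^{k-1}\). Disjointness follows from the degrees of vertex \(1\) and vertex \(2k\) together with the fan sizes. The complement statement is immediate, since \(\binom{2k}{2}-k=2k(k-1)\).

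The main obstacle is the equality analysis: proving that tightness forces exactly these three shapes, in particular that the mixed case forces the single bridge edge \((m+1)(k+m+1)\) and the fan at \(2k\). I would first try to establish a lemma that each tight edge \(ab\) satisfies \(b-a\) equal to its cut index, and then induct on \(k\) by deleting vertex \(1\) and its partner.
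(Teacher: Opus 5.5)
Your sufficiency sketch can be completed, and the count is fine. In the induction on \(j\), the correct statement is: if no earlier \(a_{j'}(a_{j'}+j')\) is hit, then \(h_{a_j}\ge j+1\). Here \(h_i=y_i-i\) is weakly increasing, with \(m+1\le h_i\le k\) by the length bound \(b-a\le k\). At \(j=k\) this is impossible. The case \(m=k\) needs the separate observation \(y_1\le k+1\). But both halves of the hard direction are missing.

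First, the lower bound \(|D|\ge k\) is not established. The graph \(H\) and the map from edges of \(D\) to edges of \(H\) are never defined, and your description cannot define one. Whether \(ab\in M\) holds depends on a rank \(i\): the Dyck path must have prescribed counts at two different times. That is not a single edge between ``open-edge counts'' in \(\{0,\ldots,k\}\). You also concede that one deletion can act on several cuts; then ``connectivity needs \(k\) edges'' proves nothing, and ``weights chosen appropriately'' is exactly the missing lemma. The paper restricts to the subfamily \(\PP\) of concatenations of fully crossing blocks \(B_{ij}\) on \([2i+1,2j]\). There the vertices \(0,\ldots,k\) are the even positions where such a matching closes, and blocking \(\PP\) means cutting every \(0\)--\(k\) path. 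A deletion \(a(a+d)\) destroys every arc \(i\to i+d\) with \(2i+1\le a\le 2i+d\). Lemma~\ref{lem:tree} then takes a minimum spanning tree of the cut graph \(Q\) with weight \(j-i\). Blocks of different weights have different lengths. Two overlapping blocks of equal weight \(d\) force \(2h<d\), hence some \(i'<v<j\) and a strictly lighter bypass \(i',j,v,j'\) or \(i,v,i',j\). So the \(k\) tree blocks are pairwise edge-disjoint and need \(k\) distinct deletions.

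Second, and more seriously, the classification cannot come from tightness in the cut argument. Equality there yields only Equation~\eqref{eq:cutform}, i.e.\ all minimum blockers of \(\PP\). There are \(k\sum_{j=0}^{k-1}j!\,(k-1-j)!\) of them: fifteen for \(k=3\), against \(b_3=9\). For example, with \(k=3\) the set \(\{12,24,14\}\) meets \(B_{01}\), \(B_{02}\) and \(B_{03}\), hence every member of \(\PP\), yet the nonnested matching \(\{13,25,46\}\) avoids it. So the equality forms must be tested against nonnested matchings outside the crossing-block family, and your plan has no such witnesses.

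The paper supplies them as follows. When \(1<t<k\), the matchings \(P_p\) of Equation~\eqref{eq:witness} force \(c=t\) and both full fans, giving \(C_{t-1}\). When \(t=k\), it inducts on \(k\): remove the length-\(k\) deletion and pass to the prefix \([2k-2]\), where appending \((2k-1)(2k)\) preserves nonnestedness. The explicit witnesses of Lemma~\ref{lem:extension} then force \(b\le c\le m+1\), which is where the monotonicity of \(a\) comes from.

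Your trichotomy is also wrong. The set \(L(k-1,(k))=\{12,\ldots,1k,k(2k)\}\) has edges at both \(1\) and \(2k\), but it is not a \(C_m\). What separates the three classes is membership of the end edges \(12\) and \((2k-1)(2k)\), and using that invariant also fixes your disjointness argument.
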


The proof is given in Sections~\ref{sec:cuts} and~\ref{sec:classification}.
The interval-cut argument first determines the minimum size and its
equality forms; explicit matching witnesses and a monotonicity argument
then give necessity and sufficiency for the displayed families.

\begin{figure}[H]
\centering
\input{figures/blocker_families.tex}
\caption{The three types on ten vertices. From top to bottom:
\(L(2;(2,2,3))=\{12,13,25,26,38\}\), its reflection
\(\{38,59,69,(8,10),(9,10)\}\), and
\(C_2=\{12,13,38,(8,10),(9,10)\}\).
The vertex order runs from left to right in each row. Solid black edges
form the endpoint fans; the remaining edges are dashed.
Each row has one edge of length five.}\label{fig:families}
\end{figure}

In particular, \(b_k=\mathrm{A083706}(k-1)\) in the OEIS~\cite{OEIS};
the classification gives this sequence
a blocker interpretation. For \(k=1\), the sole minimum blocker is
\(\{12\}\), so \(b_1=1\). Unless stated otherwise, assume \(k\ge2\).
At the level of individual examples, the first construction in
\cite[Section~2.2]{BFT2025}, applied to \(K_{2k-1}\) and reflected,
gives the complement of the full fan \(L(k,\varnothing)\).
Its second construction gives the complement of \(C_1\) when \(k=3\)
and supplies the remaining examples when \(k=2\).
The classification above establishes exhaustiveness for every \(k\).

\section{Interval cuts and the equality case}\label{sec:cuts}

For \(0\le i<j\le k\), define
\[
 B_{ij}=\{a(a+j-i):2i+1\le a\le i+j\}.
\]
This is the fully crossing perfect matching on \([2i+1,2j]\).
Let \(\PP\) be the family obtained by choosing an increasing path from
\(0\) to \(k\), then replacing each arc \(i\to j\) by \(B_{ij}\).
Its members concatenate crossing matchings on consecutive even intervals,
and belong to \(\NN\). They are precisely the perfect matchings in the
strongly nonnested family of~\cite[Section~2.2]{BFT2025}.

Let \(D\) meet every member of \(\PP\). In the auxiliary directed graph
on \(\{0,\ldots,k\}\), retain \(i\to j\) when
\(B_{ij}\cap D=\varnothing\). Let \(S\) be the set reachable from \(0\).
We have \(0\in S\), \(k\notin S\). Every arc with its tail in \(S\)
and head outside \(S\) is destroyed. Form the undirected graph \(Q\) with
edges
\[
 \{ij:i<j,\ i\in S,\ j\notin S\},
\]
giving \(ij\) weight \(j-i\). This graph is connected: \(0\) is adjacent
to every vertex outside \(S\), and \(k\) to every vertex in \(S\).

\begin{lemma}\label{lem:tree}
If \(T\) is a minimum spanning tree of \(Q\), its \(k\) associated
matchings \(B_{ij}\), \(ij\in E(T)\), are pairwise edge-disjoint.
Consequently \(|D|\ge k\).
\end{lemma}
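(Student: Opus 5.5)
The plan is to reduce the lemma to a statement about which pairs of matchings \(B_{ij}\) can share an edge, and then to rule out such pairs in a minimum spanning tree by exchange arguments.

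\textbf{Setting up.} First, every edge \(ij\) of \(Q\) has \(i\in S\) and \(j\notin S\), so the arc \(i\to j\) was not retained. Hence \(B_{ij}\cap D\neq\varnothing\). If the \(k\) matchings attached to the edges of a spanning tree \(T\) are pairwise edge-disjoint, choosing one edge of \(D\) in each gives \(k\) distinct edges of \(D\), so \(|D|\ge k\). It therefore suffices to prove disjointness.

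\textbf{When can two matchings overlap?} Every edge of \(B_{ij}\) has length \(L=j-i\), and its left endpoints fill the interval \([2i+1,2i+L]\). So \(B_{ij}\) and \(B_{i'j'}\) with \(i<i'\) share an edge only if \(j'-i'=j-i=L\) and \(d=i'-i\) satisfies \(2d<L\). In that case \(i<i'<j<j'\), with \(i,i'\in S\) and \(j,j'\notin S\). Note that \(i'j\) is an edge of \(Q\) of weight \(L-d<L\), and \(ij'\) is an edge of weight \(L+d\).

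\textbf{Exchange arguments.} Suppose, for a contradiction, that \(ij\) and \(i'j'\) both lie in \(T\) and overlap as above.
\begin{itemize}
\item Remove \(ij\) from \(T\). This leaves a component \(A\ni i\) and a component \(B\ni j\), and \(i'j'\) lies entirely inside one of them. If it lies in \(A\), then \(i'j\) crosses the cut and is strictly lighter than \(ij\). Replacing \(ij\) by \(i'j\) lowers the weight, contradicting minimality.
\item Symmetrically, remove \(i'j'\). If \(ij\) then lies on the side of \(j'\), the edge \(i'j\) again gives a lighter exchange.
\item The only surviving configuration is therefore the one where the tree path from \(i\) to \(j'\) runs
\[
i\to j\leadsto i'\to j',
\]
with a middle path \(j=v_0,v_1,\ldots,v_r=i'\). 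Along this middle path the vertices alternate between the complement of \(S\) and \(S\).
\end{itemize}

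\textbf{The main obstacle.} The hardest step is ruling out this middle configuration. I would first exploit the cut property on the middle vertices.
\begin{itemize}
\item If some middle vertex \(w\notin S\) has \(i<w<j\), then \(iw\) is an edge of \(Q\) lighter than \(L\). It crosses the cut obtained by deleting \(ij\), which is a contradiction. So every such \(w\) satisfies \(w\ge j\) or \(w<i\).
\item If some middle vertex \(u\in S\) has \(i'<u<j'\), then \(uj'\) is lighter than \(L\) and crosses the cut obtained by deleting \(i'j'\). So every such \(u\) satisfies \(u\le i'\) or \(u\ge j'\).
\end{itemize}
Next I would follow the middle path from \(j\) to \(i'\). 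It starts in the non-\(S\) vertices at or above \(j\) and ends in the \(S\) vertices at or below \(i'\). I would show it must contain an edge \(uv\) whose weight is at least \(L\) and which could be exchanged for \(i'j\). The reason is that the path has to cross from positions \(\ge j\) down to positions \(\le i'\), and it must do so through \(Q\)-edges joining an \(S\)-vertex to a larger non-\(S\)-vertex. This step is the delicate one.

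If the direct approach gets stuck, I would instead define a potential on the middle path, such as the maximal gap it spans. Comparing that potential with \(L-d\) should show, by the cycle property for the cycle formed by adding \(i'j\) to \(T\), that some middle edge is strictly heavier than \(i'j\). That again contradicts minimality. In either form, the required property of this middle path is the core of the lemma.
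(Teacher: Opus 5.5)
There is a genuine gap. Your reduction is correct as far as it goes. Every edge of \(Q\) is a destroyed arc, and two tree blocks can overlap only if they share a length \(L\) with \(d=i'-i\) satisfying \(2d<L\). Your first two exchanges correctly leave only the configuration where the tree path from \(i\) to \(j'\) passes through \(j\) and then \(i'\). But ruling out that configuration is the whole content of the lemma, and you leave it as a plan with two candidate strategies. As stated, neither can succeed.

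Take the sub-case where the middle path is the single tree edge \(i'j\), so \(T\supseteq\{ij,i'j,i'j'\}\). Then:
\begin{itemize}
\item your middle-vertex constraints are vacuous;
\item the middle path has no edge of weight at least \(L\), since its only edge weighs \(L-d\);
\item adding \(i'j\) to \(T\) creates no cycle, because it is already a tree edge.
\end{itemize}
In fact no argument using only \(i,i',j,j'\) and the tree path between them can work here. On these four vertices \(Q\) is a \(4\)-cycle with weights \(w(i'j)=L-d\), \(w(ij)=w(i'j')=L\) and \(w(ij')=L+d\). Its unique minimum spanning tree is \(\{i'j,ij,i'j'\}\), even though \(B_{ij}\) and \(B_{i'j'}\) overlap.

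The missing idea is to use a vertex off the tree path. Since \(L\ge2d+1\), we have \(j-i'=L-d\ge d+1\ge2\), so some integer \(v\) satisfies \(i'<v<j\).
\begin{itemize}
\item If \(v\notin S\), then \(iv\), \(i'v\) and \(i'j\) are edges of \(Q\) of weight less than \(L\). The path \(i,v,i',j\) joins the ends of \(ij\).
\item If \(v\in S\), then \(i'j\), \(vj\) and \(vj'\) are edges of \(Q\) of weight less than \(L\). The path \(i',j,v,j'\) joins the ends of \(i'j'\).
\end{itemize}
In either case, deleting the bypassed edge from \(T\) leaves a cut that some edge of this lighter path crosses. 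Exchanging that edge in lowers the weight, contradicting minimality. This is the paper's argument. It needs none of your tree-path case analysis, and the four-vertex example shows that some such use of an intermediate vertex is unavoidable.
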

\begin{proof}
Blocks of different weights have different original edge lengths.
Suppose blocks of equal weight \(d\), associated with \(ij\) and \(i'j'\)
where \(i<i'\), overlap. Put \(h=i'-i\). Their smaller-endpoint intervals
are \([2i+1,2i+d]\) and \([2i'+1,2i'+d]\), so \(2h<d\).
There is therefore an integer \(v\) with \(i'<v<j\).

If \(v\in S\), the path \(i',j,v,j'\) lies in \(Q\) and all its edges
have weight less than \(d\). It bypasses \(i'j'\).
If \(v\notin S\), the path \(i,v,i',j\) gives such a bypass for \(ij\).
Remove the bypassed edge from \(T\). An edge of the shorter path crosses
the resulting tree cut, and can be inserted to decrease the total weight.
This contradicts the choice of \(T\).

Each of the \(k\) disjoint blocks meets \(D\), proving the bound.
\end{proof}

For a nonnested perfect matching, the first right endpoint is at most
\(k+1\), since there are \(k\) right endpoints. Its first left endpoint
is \(1\). Thus the fan \(\{12,13,\ldots,1(k+1)\}\) is a blocker.
The minimum size for both \(\NN\) and \(\PP\) is \(k\).

\begin{lemma}\label{lem:equality}
Suppose \(|D|\le k\) and the surviving auxiliary graph has no path
from \(0\) to \(k\). Then \(|D|=k\), and for some \(1\le t\le k\),
\begin{equation}\label{eq:cutform}
D=
 \{a_d(a_d+d):1\le d<t\}
 \cup\{c(c+k)\}
 \cup\rho\bigl(\{b_d(b_d+d):1\le d\le k-t\}\bigr),
\end{equation}
where \(1\le a_d,b_d\le d\) and \(1\le c\le k\).
Moreover the reachable set is
\(S=\{0\}\cup\{t,t+1,\ldots,k-1\}\).
Conversely, every choice in Equation~\eqref{eq:cutform} is a minimum blocker of
\(\PP\).
\end{lemma}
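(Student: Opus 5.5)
The plan is to combine Lemma~\ref{lem:tree} with a counting argument. Lemma~\ref{lem:tree} gives \(|D|\ge k\), so under the hypothesis \(|D|\le k\) we get \(|D|=k\). Moreover, for any minimum spanning tree \(T\) of \(Q\), the \(k\) blocks \(B_{ij}\), \(ij\in E(T)\), are pairwise edge-disjoint. Each of them meets \(D\), because every arc from \(S\) to its complement is destroyed. Hence each such block contains exactly one edge of \(D\), and \(D\) is contained in their union. This rigidity, applied to \emph{every} minimum spanning tree of \(Q\) simultaneously, is the main tool.

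First I determine \(S\). Let \(t\) be the least positive element of \(S\), with \(t=k\) if \(S=\{0\}\); then \(1,\dots,t-1\notin S\). The aim is to show \([t,k-1]\subseteq S\). Suppose instead that some \(u\notin S\) satisfies \(t<u<k\). Then \(tu\) is an edge of \(Q\) of weight \(u-t\). I would compare it with the canonical spanning tree made of the edges \(0j\) (\(j\notin S\)) and \(ik\) (\(i\in S\)). Next I take a minimum spanning tree containing \(tu\) whenever \(tu\) is a minimum-weight edge on its fundamental cycle; if it is not, I use the edge of minimum weight on that cycle instead. Exchanging edges of equal weight produces two minimum spanning trees \(T,T'\) that differ in one edge. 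The blocks of each tree are disjoint and each contains exactly one edge of \(D\), so the removed block and the inserted block must share their \(D\)-edge. An overlap computation as in Lemma~\ref{lem:tree} then forces \(2h<d\) for the relevant blocks. This yields an intermediate vertex \(v\) whose membership in \(S\) or its complement produces a strictly lighter bypass, contradicting minimality.

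I expect this step to be the main obstacle: excluding an ``inversion'' \(i\in S\), \(j\notin S\), \(0<i<j<k\). My first attempt would be an induction on \(j-i\), taking an inversion of minimal gap so that every vertex strictly between \(i\) and \(j\) is forced into one side.

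Once \(S=\{0\}\cup[t,k-1]\) is established, the edges of \(Q\) are easy to list. They are \(0d\) for \(1\le d<t\), the edge \(0k\), and \(ik\) for \(t\le i\le k-1\); no other pairs arise, since the only vertex outside \(S\) above \(t\) is \(k\). These are \(k\) edges on \(k+1\) vertices, so \(Q\) is itself the unique spanning tree. Its blocks are as follows. \(B_{0d}=\{a(a+d):1\le a\le d\}\) contributes \(a_d(a_d+d)\). \(B_{0k}\) contributes some \(c(c+k)\) with \(1\le c\le k\). For \(d=k-i\), the block \(B_{ik}=\{a(a+d):2k-2d+1\le a\le 2k-d\}\) equals \(\rho(\{b(b+d):1\le b\le d\})\) and contributes \(\rho(b_d(b_d+d))\). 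Since each block contains exactly one edge of \(D\), we obtain exactly Equation~\eqref{eq:cutform}.

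For the converse, take \(D\) as in Equation~\eqref{eq:cutform} and put \(S_0=\{0\}\cup[t,k-1]\). Every arc leaving \(S_0\) is of the form \(0\to d\) with \(d<t\), \(0\to k\), or \(i\to k\) with \(i\ge t\), and each of these arcs is destroyed by the corresponding chosen edge. Hence no surviving path runs from \(0\) to \(k\), so \(D\) meets every member of \(\PP\). Since \(|D|=k\) and \(k\) is the minimum size for \(\PP\) by Lemma~\ref{lem:tree}, \(D\) is a minimum blocker. Finally, applying the forward argument to such a \(D\) gives reachable set exactly \(S_0\). This matches the stated form of \(S\), since \(S\subseteq S_0\) and the structure proved above forces equality.
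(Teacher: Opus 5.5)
There is a genuine gap, and it is in the step you yourself flag: you never prove that \(S=\{0\}\cup[t,k-1]\), i.e.\ that no inversion \(i\in S\), \(j\notin S\), \(0<i<j<k\) exists. The sketch does not supply this step.
\begin{itemize}
\item Being the lightest edge on its fundamental cycle does not put \(tu\) in any minimum spanning tree.
\item Nothing in your setup produces the equal-weight exchange between two minimum spanning trees that you then invoke.
\item The induction on \(j-i\) is only announced, not carried out.
\end{itemize}
Everything else rests on this step: without it \(Q\) need not be a tree, and Equation~\eqref{eq:cutform} does not follow. The underlying problem is that you apply ``every arc leaving \(S\) is destroyed'' only to tree edges. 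Rigidity of the tree blocks alone yields no contradiction. You need a destroyed arc whose block cannot share a deletion with any tree block.

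The missing idea is the arc \(0\to k\).
\begin{itemize}
\item Since \(0\in S\) and \(k\notin S\), this arc is destroyed, so \(D\) contains an edge of length \(k\).
\item Among all blocks, only \(B_{0k}\) contains edges of length \(k\).
\item Since \(D\) lies in the union of the tree blocks, \(0k\in T\) for every minimum spanning tree \(T\).
\item Now suppose an inversion \(i\in S\), \(j\notin S\), \(0<i<j<k\) existed. Then \(0j\), \(ji\), \(ik\) are edges of \(Q\) of weights \(j\), \(j-i\), \(k-i\), all less than \(k\).
\item So \(0k\) is the strict maximum on the cycle \(0,j,i,k,0\) and lies in no minimum spanning tree, a contradiction.
\end{itemize}
This is the paper's argument, phrased there as: \(Q-0k\) has no \(0\)--\(k\) path, while \(0,j,i,k\) would be one.

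With this in place, your choice of \(t\) as the least positive element of \(S\) gives the stated reachable set. The rest of your argument is correct: the edge list of \(Q\), the identification \(B_{(k-d)k}=\rho(B_{0d})\), and the converse.

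One small point: the last sentence of your converse is unnecessary. The ``moreover'' clause concerns the forward direction, where \(t\) is read off from \(S\). As written, that sentence would also need the observation from the proof of Corollary~\ref{cor:concat} that no blocker has the form \eqref{eq:cutform} for two different splits.
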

\begin{proof}
By Lemma~\ref{lem:tree}, \(|D|=k\).
Each tree block contains exactly one deletion, and their union contains
all of \(D\). Since \(D\) meets \(B_{0k}\), it contains an edge of length
\(k\). Only the tree block \(B_{0k}\) can contain that edge, so \(0k\in T\).
Every other edge of \(Q\) has weight less than \(k\). The same tree
exchange argument shows that \(Q-0k\) has no path from \(0\) to \(k\).

In particular, there are no interior indices \(i<j\) with \(i\in S\)
and \(j\notin S\): otherwise \(0,j,i,k\) would give such a path.
Thus \(S\) has the asserted form. The graph \(Q\) is now itself the tree
with edges \(0d\) for \(1\le d<t\), the edge \(0k\), and edges \(ik\)
for \(t\le i<k\). Choosing one deletion from each of their blocks gives
exactly the form in Equation~\eqref{eq:cutform}.

Conversely, these choices destroy every arc leaving the indicated set
\(S\), so they meet every \(0\)-to-\(k\) path and hence every member of
\(\PP\). Their size is \(k\).
\end{proof}

The end split \(t=k\) contains \(12\) and omits \((2k-1)(2k)\).
The split \(t=1\) has the reflected property. An internal split contains
both end edges. This distinction will separate the three classes.

\begin{corollary}\label{cor:concat}
The number of minimum blockers of \(\PP\) is
\[
 k\sum_{j=0}^{k-1}j!(k-1-j)!.
\]
\end{corollary}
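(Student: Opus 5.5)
By Lemma~\ref{lem:equality}, every minimum blocker of \(\PP\) has exactly \(k\) edges and has the form \eqref{eq:cutform} for some split parameter \(t\in\{1,\dots,k\}\). Conversely, every such choice is a minimum blocker. So the count is a sum over \(t\) of the number of distinct edge sets of the form \eqref{eq:cutform}. Two things must be justified: distinct choices give distinct sets, and sets arising from different \(t\) do not coincide.

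\textbf{Counting for fixed \(t\).} The edges \(a_d(a_d+d)\) for \(1\le d<t\) have pairwise different lengths \(d\), all less than \(k\). Likewise the reflected edges \(\rho(b_d(b_d+d))\) have lengths \(d\le k-t<k\), and the edge \(c(c+k)\) has length exactly \(k\).
\begin{itemize}
\item The edge of length \(k\) is unique, so \(c\) is recovered from \(D\).
\item For each length \(d<k\), the left family and the right family could both contribute an edge of length \(d\), namely when \(d<t\) and \(d\le k-t\).
\item These two edges lie in different blocks of the tree \(T\): one in \(B_{0d}\), with left endpoint in \([1,d]\), and the other in \(B_{k-d,k}\), with left endpoint in \([2k-2d+1,2k-d]\).
\item The blocks are edge-disjoint by Lemma~\ref{lem:tree}, and the two left-endpoint ranges are disjoint because \(d\le k-t<k\). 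So within each length the left and right edges can be told apart, and \(D\) determines all of \(a,b,c\).
\end{itemize}
Hence for fixed \(t\) there are \((t-1)!\) choices of \(a\), \((k-t)!\) choices of \(b\), and \(k\) choices of \(c\), giving \(k\,(t-1)!\,(k-t)!\) sets.

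\textbf{Different splits give different sets.} Lemma~\ref{lem:equality} states that the reachable set of the auxiliary graph is \(S=\{0\}\cup\{t,\dots,k-1\}\). This set is determined by \(D\) alone, since the auxiliary graph is defined from \(D\). Since \(k\notin S\), the value \(t\) is recovered as \(\min(S\setminus\{0\})\), or as \(t=k\) when \(S=\{0\}\). So the sets counted for different \(t\) are pairwise distinct.

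\textbf{Summing.} Summing over \(t\) and substituting \(j=t-1\) gives
\[
\sum_{t=1}^{k}k\,(t-1)!\,(k-t)! \;=\; k\sum_{j=0}^{k-1}j!\,(k-1-j)!.
\]
The only delicate step is the injectivity for fixed \(t\), where a left edge and a right edge of equal length must be distinguished. The disjointness of tree blocks from Lemma~\ref{lem:tree} settles this.
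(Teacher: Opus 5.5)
Your count for a fixed split is correct, and it is more detailed than the paper's ``no repetitions within that split.'' One reason needs correcting. The ranges \([1,d]\) and \([2k-2d+1,2k-d]\) are disjoint because \(d<t\) and \(d\le k-t\) together give \(2d\le k-1\). The bound \(d<k\) alone does not suffice: for \(d=k-1\) and \(k\ge3\) the ranges overlap. The appeal to the tree blocks \(B_{0d}\), \(B_{k-d,k}\) is unnecessary there, and it silently assumes that the tree of \(D\) is the split-\(t\) tree. The endpoint ranges alone identify which edge is the left one and which is the reflected one.

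The real gap is in ``different splits give different sets.'' Lemma~\ref{lem:equality} asserts that there is \emph{some} \(t\) for which \(D\) has the form \eqref{eq:cutform} and \(S=\{0\}\cup\{t,\dots,k-1\}\). In its proof, that \(t\) is read off from \(S\). The lemma does not say that every \(t\) for which \(D\) admits a representation \eqref{eq:cutform} equals \(\min(S\setminus\{0\})\), and that uniqueness is exactly what you are trying to prove. The converse part of the lemma only gives \(S\subseteq\{0\}\cup\{t,\dots,k-1\}\).

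To close your route, you must show \(t\in S\) when \(t<k\), that is, that the arc \(0\to t\) survives. The left part has lengths below \(t\), and the central edge has length \(k\). A reflected deletion of length \(t\) exists only if \(2t\le k\), and then its smaller endpoint is at least \(2k+1-2t>t\), so it misses \(B_{0t}\). Hence \(t\in S\subseteq\{0\}\cup\{t,\dots,k-1\}\), which recovers \(t\) from \(D\). For \(t=k\), the inclusion already gives \(S=\{0\}\).

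The paper instead argues directly from the edge set. The multiset of noncentral lengths determines \(\{t-1,k-t\}\), so two distinct splits must be \(t<k+1-t\). The unique length-\(t\) deletion then has smaller endpoint at least \(2k+1-2t\) in one split and at most \(t\) in the other. Both arguments rest on the same endpoint inequality. Yours has the appeal of reading \(t\) intrinsically from reachability, as the algorithm of Theorem~\ref{thm:algorithm} does, but only once this check is supplied.
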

\begin{proof}
For a fixed \(t\), Equation~\eqref{eq:cutform} has
\(k(t-1)!(k-t)!\) choices, with no repetitions within that split.
The multiset of noncentral deletion lengths determines
\(\{t-1,k-t\}\). If two candidate splits differ, write them
\(t<t'=k+1-t\), so \(d=t\le k/2\).
The unique length-\(d\) deletion has smaller endpoint at least
\(2k+1-2d\) in the first split and at most \(d\) in the second.
These ranges are disjoint. Thus different splits also give distinct
blockers, and summing proves the formula.
\end{proof}

\section{Classification of nonnested blockers}\label{sec:classification}

We first record a length bound. If \(ab\) has rank \(i\) in the two
endpoint lists of a nonnested perfect matching, its \(i-1\) earlier left
endpoints and \(k-i\) later right endpoints give
\[
 k-1\le (a-1)+(2k-b).
\]
Hence \(b-a\le k\).

For \(1\le p\le k-1\), the following nonnested perfect matchings will
test the equality forms:
\begin{equation}\label{eq:witness}
P_p=\{1(p+1)\}
 \cup\{i(k+i):2\le i\le p\}
 \cup\{(i+1)(k+i):p+1\le i\le k\}.
\end{equation}
The lengths in its three parts are \(p\), \(k\), and \(k-1\),
respectively, with the empty-interval convention introduced above.

\subsection{Necessity: internal splits}

Let \(D\) be a minimum blocker of \(\NN\). It meets \(\PP\), so
Lemma~\ref{lem:equality} applies. Suppose \(2\le t\le k-1\)
in Equation~\eqref{eq:cutform}. All noncentral deletions have length at most
\(k-2\). The first edge of \(P_t\) is absent from \(D\):
the left part has lengths below \(t\), and the reflected part has no
edge incident with \(1\). Thus \(P_t\) must meet \(c(c+k)\),
forcing \(2\le c\le t\). Reflection changes \(t\) to \(k+1-t\)
and \(c\) to \(k+1-c\); the reflected argument gives \(c\ge t\).
Therefore \(c=t\).

For every \(p<t\), the matching \(P_p\) now avoids all deletions except
possibly \(1(p+1)\). That edge must belong to \(D\).
Reflection similarly forces the full right fan. Thus \(D=C_{t-1}\).

\subsection{Necessity: end splits}

By reflection it suffices to treat \(t=k\), where
\[
 D=\{a_d(a_d+d):1\le d\le k\},\qquad 1\le a_d\le d.
\]
We use induction on \(k\), starting from the sole blocker on two
vertices. Put \(c=a_k\) and remove \(c(c+k)\), leaving \(D'\).
All edges of \(D'\) lie in \([2k-2]\). If a prefix matching avoided
\(D'\), appending \((2k-1)(2k)\) would give a nonnested perfect matching
avoiding \(D\): the appended edge is absent from \(D\), and an edge of
length \(k\) cannot belong to a nonnested matching on that prefix.
So \(D'\) is a minimum prefix blocker.

When \(k-1\ge2\), \(D'\) contains the left end edge of the prefix and
omits its right end edge. Induction gives \(D'=L(m,a')\)
for some \(1\le m\le k-1\). The one-edge base gives the same
description when \(k=2\).
If \(m=k-1\), every \(1\le c\le k\) gives a displayed left blocker:
\(c=1\) extends the full fan, and \(c\ge2\) starts a one-term sequence.
Otherwise the following lemma determines the allowed extension.

\begin{lemma}\label{lem:extension}
Let \(k\ge3\), let \(D'=L(m,a')\) on \([2k-2]\) with
\(1\le m\le k-2\), and set \(b=a'_{k-1}\).
If \(D'\cup\{c(c+k)\}\) blocks \(\NN\), where \(1\le c\le k\),
then \(b\le c\le m+1\).
\end{lemma}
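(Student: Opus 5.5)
The plan is to argue by contraposition: if \(c<b\) or \(c>m+1\), exhibit a nonnested perfect matching on \([2k]\) that avoids \(D'\cup\{c(c+k)\}\). First I will record the structure of \(D'=L(m,a')\) on \([2k-2]\). It consists of the fan \(\{1i:2\le i\le m+1\}\) and one edge \(a'_j(a'_j+j)\) of each length \(j\in[m+1,k-1]\). These satisfy \(2\le a'_{m+1}\le\cdots\le a'_{k-1}=b\le m+1\). Since \(m\le k-2\), the unique edge of \(D'\) of length \(k-1\) is \(b(b+k-1)\). Every other edge of \(D'\) has length at most \(k-2\), and \(D'\) has no edge of length \(k\).

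\emph{Case \(c>m+1\) (or \(c=1\)).} I will use the witness \(P_{m+1}\) from Equation~\eqref{eq:witness}, which is available because \(m+1\le k-1\). Its edges have three lengths, and each avoids the deletions for a separate reason.
\begin{itemize}[nosep]
\item The first edge is \(1(m+2)\), of length \(m+1\). It lies outside the fan. The only edge of \(D'\) of that length, \(a'_{m+1}(a'_{m+1}+m+1)\), has left endpoint \(a'_{m+1}\ge2\), so the two differ.
\item The edges of length \(k-1\) have left endpoints in \([m+3,k+1]\). These cannot equal \(b\le m+1\).
\item The edges of length \(k\) are \(i(k+i)\) with \(2\le i\le m+1\). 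They avoid \(c(c+k)\) exactly when \(c\notin[2,m+1]\).
\end{itemize}
Hence \(P_{m+1}\) avoids \(D'\cup\{c(c+k)\}\) whenever \(c>m+1\) or \(c=1\). This already gives \(c\le m+1\), and it reduces the remaining case to \(2\le c<b\).

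\emph{Case \(2\le c<b\).} This is the step I expect to be the main obstacle. The witnesses \(P_p\) cannot handle it. Avoiding the fan forces \(p\ge m+1\ge b\). The length-\(k\) edges of \(P_p\) then occupy every left endpoint in \([2,p]\), and this range contains \(c\).

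So I will build the witness from the minimality of \(D'\) instead. Because \(D'\) is a minimum blocker on \([2k-2]\) with \(k-1\) edges, the set \(D'\setminus\{b(b+k-1)\}\) has only \(k-2\) edges. By Lemma~\ref{lem:tree} it therefore does not block nonnested perfect matchings on \([2k-2]\). Hence there is a nonnested perfect matching \(M\) on \([2k-2]\) whose only deletion is \(b(b+k-1)\).

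By the length bound, an edge of length \(k-1\) in a nonnested perfect matching of \([2k-2]\) is extremal. Its \(b-1\) predecessors are all left endpoints and its \(k-1-b\) successors are all right endpoints. This pins down the shape of \(M\) near that edge.

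I will then lift \(M\) to \([2k]\) by inserting two new vertices. The aim is to replace \(b(b+k-1)\) by the length-\(k\) edge \(b(b+k)\), which is allowed because \(c\neq b\). All other edges should keep lengths at most \(k-1\), with left endpoints shifted so as to avoid the fan and the edges \(a'_j(a'_j+j)\).

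The concrete move I would try first is this. Insert a new right endpoint immediately after position \(b+k-2\). Insert a new vertex at the end and pair it with a new left endpoint placed just after the last left endpoint, so that the appended edge stays nonnested. Then check, using \(a'_j\le b\) and the monotonicity of the sequence \(a'\), that every shifted edge of length \(j\in[m+1,k-1]\) has left endpoint different from \(a'_j\).

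If this shifting proves awkward, I will fall back on a direct construction. Mirroring \(P_p\), take length-\(k\) edges exactly at the left endpoints in \([c+1,b]\). Take length-\((k-1)\) edges at the left endpoints in \([b+1,k]\), paired so as to close the right side. Start from the edge \(1(m+2)\) and fill the left endpoints \(2,\ldots,c\) with edges of intermediate lengths chosen to dodge each \(a'_j\); this is possible because \(c<b\le a'_j\) fails only for \(j\) with \(a'_j\le c\). Either way, the resulting matching avoids \(D'\cup\{c(c+k)\}\), which proves \(c\ge b\).
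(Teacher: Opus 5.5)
\textbf{Where the proposal falls short.} Your case $c=1$ or $c>m+1$ is correct and is exactly the paper's use of $P_{m+1}$. The gap is the case $2\le c<b$, which is where the content of the lemma lies. You flag it as the main obstacle, but neither of your two constructions defines a matching, and the avoidance check is only promised.

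In the lifting plan you insert ``a new right endpoint'' after position $b+k-2$. You also add a last vertex paired with ``a new left endpoint just after the last left endpoint''. That is three new vertices, not two, and one of them is left unpaired. The placement of the insertion also matters. For $k=5$, $D'=\{12,13,14,48\}$ and $c=3$, the only admissible $M$ is $\{15,26,37,48\}$. Inserting the new left endpoint just after its last left endpoint $4$ turns $37$ into $38=c(c+k)$.

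The fallback is inconsistent as stated. A length-$k$ edge at $b$ and a length-$(k-1)$ edge at $b+1$ both end at $b+k$. Also, $m+2\in[b+1,k]$ is required to be a left endpoint although it is the partner of $1$. Finally, the ``intermediate lengths'' at $2,\ldots,c$ are never chosen.

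\textbf{The missing idea.} No dodging is needed at $2,\ldots,c$. Give those left endpoints edges of length $k-1$; they miss the unique old length-$(k-1)$ deletion $b(b+k-1)$ precisely because $c<b$. Put the length-$k$ edges at $c+1,\ldots,m+1$, where they miss $c(c+k)$. The paper does this by pairing the increasing lists of
\[
X=[1,m+1]\cup[m+3,k]\cup\{k+c\},\qquad Y=\{m+2\}\cup[k+1,k+c-1]\cup[k+c+1,2k].
\]
This yields $1(m+2)$, the two runs just described, the edges $(i+1)(k+i)$ for $m+2\le i\le k-1$, and $(k+c)(2k)$.

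\textbf{Repairing your lifting idea.} Your approach can also be made to work, giving a different route through Lemma~\ref{lem:tree}. Insert one vertex $v$ immediately before the old vertex $b+k-1$, append the vertex $2k$, and add the edge $v(2k)$. This edge nests nothing, because every old vertex from $b+k-1$ on is a right endpoint of $M$. Now check each group of edges:
\begin{itemize}
\item Edges of rank below $b$ lie in $[1,b+k-2]$ and are unchanged. The one at $c$ has length at most $k-1$, so it is not $c(c+k)$.
\item The rank-$b$ edge becomes $b(b+k)$, which has length $k$, so it is not in $D'$, and it differs from $c(c+k)$ since $c\ne b$.
\item Edges of rank above $b$ have left endpoints greater than $c$. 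Their right endpoints are at least $b+k+1$, beyond every edge of $D'$, since $a'_j\le b$ gives $a'_j+j\le b+k-1$.
\item The new edge $v(2k)$ ends at $2k$, so it is neither in $D'$ nor equal to $c(c+k)$.
\end{itemize}
So the check that matters is on right endpoints, not on left endpoints against $a'_j$ as you planned.
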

\begin{proof}
If \(c=1\) or \(c>m+1\), the matching \(P_{m+1}\) avoids the deletion
set. Its first edge is outside the fan; its length-\(k\) edges start
at \(2,\ldots,m+1\); and its final edges start beyond all smaller
endpoints of old deletions.

Suppose instead \(2\le c<b\). Define
\begin{align*}
Y&=\{m+2\}\cup[k+1,k+c-1]\cup[k+c+1,2k],\\
X&=[1,m+1]\cup[m+3,k]\cup\{k+c\}.
\end{align*}
Pair the increasing list of \(X\) with the increasing list of \(Y\).
Recall that an integer interval with its lower endpoint greater than
its upper endpoint is empty. These complementary sets each have \(k\) elements. Since
\(c\le m\le k-2\), every left endpoint is smaller than its partner,
including the final pair \((k+c,2k)\). Thus this is a nonnested
perfect matching.

Its first pair is \(1(m+2)\). At left endpoints \(2,\ldots,c\),
the edge lengths are \(k-1\); at \(c+1,\ldots,m+1\), they are \(k\).
All old nonfan deletions of smaller length are avoided.
The old length-\((k-1)\) deletion starts at \(b>c\), and the new
length-\(k\) deletion starts at \(c\), so both are avoided as well.
These witnesses exclude every \(c\) outside \([b,m+1]\).
\end{proof}

The condition in Lemma~\ref{lem:extension} is exactly the condition
for appending \(c\) to the sequence in Equation~\eqref{eq:left}.
This completes the necessity argument.

\subsection{Sufficiency and counting}

Consider \(L(m,a)\). If a nonnested perfect matching avoids its fan,
its first right endpoint exceeds \(m+1\). Hence the first \(m+1\)
vertices are left endpoints. For \(m=k\), this is impossible.
Otherwise, write \(h_i=y_i-i\) for \(1\le i\le m+1\).
The increasing right endpoints and the length bound give
\[
 m+1\le h_1\le\cdots\le h_{m+1}\le k.
\]
The map \(g(j)=h_{a_j}\) is a weakly increasing self-map of the finite
integer interval \([m+1,k]\). Iterating from its least element gives
a bounded nondecreasing sequence, hence a fixed point \(j=g(j)\).
The matching then contains \(a_j(a_j+j)\), a deleted edge.
Reflection proves sufficiency for the right family.

If a matching avoids both fans of \(C_m\), its first \(m+1\) left
endpoints are \(1,\ldots,m+1\), and its last \(k-m\) right endpoints
are \(k+m+1,\ldots,2k\). Its edge of rank \(m+1\) is the deleted edge
\((m+1)(k+m+1)\). Thus every displayed set is a blocker of size \(k\).

For fixed \(m\), the weakly increasing sequence in Equation~\eqref{eq:left}
has \(\binom{k-1}{k-m}\) choices. The degree of vertex \(1\)
recovers \(m\), and each remaining selection is recovered by its length.
Summing over \(m\) gives \(2^{k-1}\) left blockers.
The three classes are distinguished by their end edges, proving
disjointness and Equation~\eqref{eq:count}. Taking complements gives
\(\binom{2k}{2}-k=2k(k-1)\) edges. This proves
Theorem~\ref{thm:main}.

\section{Consequences and constructive certificates}\label{sec:consequences}

The formulas give some immediate structural information.
The graph induced by the nonisolated vertices of a minimum blocker is
a tree on \(k+1\) vertices. A left blocker has depth at most two
from vertex \(1\): every nonfan edge joins a fan vertex to a distinct
new vertex. A mixed blocker consists of the path
\(1,m+1,k+m+1,2k\), with all additional vertices attached to
its two outer vertices. In particular, every such tree has diameter
at most five.

\begin{proposition}\label{prop:coblocker}
For \(k\ge3\), the family \(\BB\) of minimum blockers of \(\NN\)
has a unique transversal of minimum cardinality, namely
\(\{12,(2k-1)(2k)\}\).
\end{proposition}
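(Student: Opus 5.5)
The plan is to show three things: the pair \(\{12,(2k-1)(2k)\}\) meets every member of \(\BB\); no single edge does; and every other pair misses some minimum blocker. All three reduce to the explicit list in Theorem~\ref{thm:main}.

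\emph{Transversal.} Every \(L(m,a)\) contains the fan edge \(12\), since \(m\ge1\). Every \(\rho(L(m,a))\) contains \(\rho(12)=(2k-1)(2k)\). Every \(C_m\) contains \(12\). So the pair is a transversal.

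\emph{No single edge suffices.} The blockers \(L(k,\varnothing)=\{1i:2\le i\le k+1\}\) and \(\rho(L(k,\varnothing))=\{i(2k):k\le i\le 2k-1\}\) share an edge only if \(1(2k)\) lies in both. This would need \(k+1\ge 2k\), which is impossible for \(k\ge2\). Hence the minimum transversal size is exactly two.

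\emph{Uniqueness.} Let \(\{e,f\}\) be any transversal of size two. I would first show that \(12\in\{e,f\}\). The idea is to exhibit several left blockers that pairwise share only the edge \(12\), so that a pair omitting \(12\) misses one of them.

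I take the three left blockers \(L(1;(2,\dots,2))\), \(L(1;(1\text{-lengths}))\), and \(L(k,\varnothing)\). Equation~\eqref{eq:left} with \(m=1\) forces \(a_j=2\) for all \(j\), so \(L(1,(2,\dots,2))=\{12\}\cup\{2(2+j):2\le j\le k\}\). Its non-\(12\) edges all start at \(2\) and end at \(4,\dots,k+2\). The non-\(12\) edges of \(L(k,\varnothing)\) start at \(1\) and end at \(3,\dots,k+1\). These two sets of edges are disjoint.

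A third left blocker should avoid \(e\) and \(f\) outside \(12\). The natural choice is \(L(k-1,(2))\) or \(L(2;(3,\dots,3))\). The latter consists of \(12,13\) together with \(3(3+j)\) for \(3\le j\le k\), and its non-\(12\) edges are \(13\) and \(36,\dots,3(k+3)\). Together with the previous two blockers, these give three left blockers whose edge sets, apart from \(12\), are pairwise disjoint except that \(13\) is shared between \(L(k,\varnothing)\) and \(L(2;\cdot)\).

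To handle this I will choose the triple more carefully, if needed using \(C_1=\{12,2(k+2)\}\cup\{j(2k)\}\) as an extra test set, so that there are three members whose edge sets pairwise intersect only in \(12\). A pair \(\{e,f\}\) without \(12\) can then meet at most two of them, a contradiction. Note that \(k\ge3\) is needed for these families to be distinct, which matches the hypothesis.

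Reflecting the argument forces \((2k-1)(2k)\in\{e,f\}\). Since \(12\ne(2k-1)(2k)\), the pair is exactly \(\{12,(2k-1)(2k)\}\).

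\emph{Main obstacle.} The main difficulty is choosing three minimum blockers that pairwise intersect only in \(12\). I would first try \(L(k,\varnothing)\), \(L(1,(2,\dots,2))\), and \(C_1\). Their non-\(12\) parts are, respectively, \(\{1i:3\le i\le k+1\}\), \(\{2(2+j):2\le j\le k\}\), and \(\{2(k+2)\}\cup\{j(2k):k+2\le j\le 2k-1\}\). The second and third share \(2(k+2)\). To remove this overlap I would replace \(C_1\) by a right blocker \(\rho(L(m,a))\) that avoids \(2(k+2)\) and \(12\), for instance \(\rho(L(k,\varnothing))\). Its edges \(i(2k)\) with \(i\ge k\) meet neither of the other two sets when \(k\ge3\), since \(k+2<2k\) and \(k+1<2k\).

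This triple, \(L(k,\varnothing)\), \(L(1,(2,\dots,2))\), and \(\rho(L(k,\varnothing))\), is the one to use. The first two share only \(12\), while the third shares no edge with either of them. So a transversal pair avoiding \(12\) must hit the first two blockers using two distinct non-\(12\) edges, one from each, and then it cannot also hit the third. This forces \(12\in\{e,f\}\), and reflection finishes the argument.
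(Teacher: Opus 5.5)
Your final argument is correct and is essentially the paper's proof. The paper uses the same three witnesses: the two disjoint full fans and \(L(1)=\{12\}\cup\{2(2+j):2\le j\le k\}\). It observes that a two-edge transversal has one edge in each fan, and that for \(k\ge3\) the set \(L(1)\) meets the left fan only in \(12\) and has no edge at \(2k\); this is your observation that the three sets become pairwise disjoint once \(12\) is removed, and reflection finishes both versions. In a write-up, drop the exploratory detours through \(L(2;(3,\dots,3))\), \(C_1\) and the garbled ``\(1\)-lengths'' triple, since only the final triple is used.
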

\begin{proof}
Every minimum blocker contains an end edge, so this pair is a
transversal. The full left and right fans are disjoint members of
\(\BB\), so a transversal has at least two edges.
Any two-edge transversal contains one edge from each full fan.
The blocker
\[
 L(1)=\{12\}\cup\{2(2+j):2\le j\le k\}
\]
has no edge incident with \(2k\), and its sole edge at \(1\) is \(12\).
It forces the edge \(12\) in the transversal. Reflection forces the
other end edge.
\end{proof}

Place the vertices on a convex polygon in their cyclic order; a matching
is noncrossing when its chords have disjoint interiors.
For comparison, minimum blockers of noncrossing perfect matchings
have transversal number \(k\), as follows from the results of
Keller and Perles~\cite{KP2013}. The short argument is useful here.
Label the cyclically ordered vertices by \(0,\ldots,2k-1\).
For every odd residue \(s\), pair \(a\) with \(s-a\pmod{2k}\).
These are \(k\) disjoint noncrossing perfect matchings:
within each ordinary endpoint sum, the chords are nested, and the
two possible sums \(s\) and \(s+2k\) occupy separated intervals.
Each opposite-parity fan is a blocker of size \(k\), since a
noncrossing perfect matching pairs opposite parities.
Thus the minimum size is \(k\). A transversal of all those fans
covers every vertex, and has at least \(k\) edges; any fixed
noncrossing perfect matching is a transversal of all blockers and
attains the bound. This is the parallel-matching and star-blocker
argument of~\cite{KP2012,KP2013}.

For \(k\ge3\), a permutation of the ground edges therefore cannot
identify the noncrossing and nonnested matching families, because it
would preserve the transversal number of their minimum-blocker
families. This incidence distinction is compatible with bijections
between the Catalan matching objects.

\subsection{An algorithm with explicit certificates}\label{sec:algorithm}

\begin{theorem}\label{thm:algorithm}
Given a set \(D\) of at most \(k\) edges on \([2k]\), one can
deterministically return either a description of \(D\) in
Theorem~\ref{thm:main} or a nonnested perfect matching disjoint from \(D\).
The algorithm uses \(O(k^2)\) time and \(O(k)\) auxiliary words
on a word-RAM with \(O(\log k)\)-bit integers.
\end{theorem}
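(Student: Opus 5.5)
We make the proofs of Lemmas~\ref{lem:tree}, \ref{lem:equality} and Theorem~\ref{thm:main} constructive. Throughout, \(D\) is stored as \(O(k)\) integer pairs. We use a Boolean test ``\(ab\in D\)'' that costs \(O(1)\) after preprocessing. Since \(O(k)\) auxiliary words are allowed, we bucket \(D\) by edge length \(\ell=b-a\) into an array indexed by \(\ell\in[1,2k-1]\). This takes \(O(k)\) words, because only \(|D|\le k\) edges occur. Each bucket is kept as a short sorted list. The total length of all buckets is at most \(k\), so the total scanning cost per length class is controlled.

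\textbf{Step 1 (reachability in the auxiliary DAG).} For each arc \(i\to j\), with \(0\le i<j\le k\), we decide whether \(B_{ij}\cap D=\varnothing\). The block \(B_{ij}\) consists exactly of the edges of length \(j-i\) whose smaller endpoint lies in \([2i+1,i+j]\). So the arc is destroyed iff the length-\((j-i)\) bucket contains an element in that interval. For fixed length \(d\), the intervals \([2i+1,2i+d]\) slide with \(i\). A two-pointer sweep over the sorted bucket therefore answers all \(k+1-d\) arcs of length \(d\) in \(O(k-d+1+|\text{bucket}_d|)\) time. Summed over \(d\), this is \(O(k^2)\) time in total. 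We never store the arc set. Instead we run the DAG forward reachability in topological order \(0,1,\dots,k\), keeping one predecessor pointer per vertex, which is \(O(k)\) words. For each head \(j\) we examine the \(j\) possible arcs, and these tests can be arranged length-wise as above. The simplest fallback is a bucket pointer per length, advanced monotonically as \(i\) increases. If \(k\in S\), the predecessor pointers give a path from \(0\) to \(k\). Its member of \(\PP\subseteq\NN\) is disjoint from \(D\), and we output that matching in \(O(k)\) time.

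\textbf{Step 2 (reading off the equality form).} Suppose \(k\notin S\). If \(|D|<k\), this contradicts Lemma~\ref{lem:tree}, so \(|D|=k\). Lemma~\ref{lem:equality} then says \(D\) has the form \eqref{eq:cutform} with \(S=\{0\}\cup[t,k-1]\). We read off \(t\) from \(S\), and then \(c\), the \(a_d\) and the \(b_d\) from the buckets in \(O(k)\) time.

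\textbf{Step 3 (classification or witness).} We now follow the necessity proof of Section~\ref{sec:classification} exactly, making each contradiction produce a witness.

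For an internal split \(2\le t\le k-1\), we test \(c=t\) and whether the two fans are full. If these hold, we output \(C_{t-1}\). Otherwise the proof names a violated \(P_p\), or its reflection, and we output it.

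For \(t=k\), and symmetrically for \(t=1\) via \(\rho\), we unwind the induction iteratively rather than recursively. We process \(d=1,2,\dots,k\) and maintain the current prefix description \(L(m,a)\) on \([2d]\) in \(O(1)\) state words plus the array \(a\). At step \(d\) we check the condition of Lemma~\ref{lem:extension}, namely \(b\le a_d\le m+1\), or the case \(m=d-1\). If the condition holds, we update \(m\) or append \(a_d\). If it fails, the lemma supplies a matching on \([2d]\) avoiding the prefix deletions: either \(P_{m+1}\), or the \(X\)/\(Y\) matching. We extend it by the pairs \((2e-1)(2e)\) for \(e=d+1,\dots,k\). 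The argument in the end-split necessity proof shows that no later deletion, which has length \(e\ge 2\), can be hit by these length-one pairs. Hence the extended matching is a nonnested perfect matching avoiding \(D\). Each step costs \(O(1)\), apart from the final \(O(k)\) output, so this phase takes \(O(k)\) time. Correctness of the positive answers is Theorem~\ref{thm:main}'s sufficiency.

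\textbf{Main obstacle.} The main difficulty is Step~1's bookkeeping. We must meet \(O(k^2)\) time while storing only \(O(k)\) words, without materializing the \(\Theta(k^2)\) arcs. The length-bucketed monotone pointer scan should resolve this. The delicate point is that each bucket pointer is advanced only monotonically in \(i\). To guarantee this, we iterate over tails \(i\) in the outer loop, pushing reachability forward, with lengths \(d\) in the inner loop.
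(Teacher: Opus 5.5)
Your proposal is correct and follows the paper's proof: reachability in the auxiliary DAG with predecessor pointers, then Lemma~\ref{lem:equality} to read off \(t\), then the witnesses \(P_p\) and \(\rho(P_p)\) for internal splits, then a left-to-right scan using the witnesses of Lemma~\ref{lem:extension}, padded by consecutive pairs, for end splits; the only differences are bookkeeping (sorted length buckets with monotone per-length pointers instead of the paper's event counters, and picking the internal-split witness directly from \(c\) and the two fans instead of testing all \(2(k-1)\) candidates). In the padding step you should also state the half of the justification you omitted: the prefix witness is a nonnested perfect matching on \([2d]\), so its edges have length at most \(d\) and miss every later deletion, while the appended pairs miss \(12\), the only length-one deletion.
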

\begin{proof}
A deletion \(a(a+d)\) destroys \(i\to i+d\) exactly for integer starts
in the interval
\begin{equation}\label{eq:events}
 \max\left(0,\left\lceil\frac{a-d}{2}\right\rceil\right)
 \le i\le
 \min\left(k-d,\left\lfloor\frac{a-1}{2}\right\rfloor\right).
\end{equation}
This is equivalent to \(2i+1\le a\le2i+d\).
Skip deletions with \(d>k\) and empty intervals.
For every nonempty interval \([l,u]\), create events \((d,+1)\)
at \(l\) and \((d,-1)\) at \(u+1\).
The at most \(2k\) events can be placed in buckets indexed by
\(\{0,\ldots,k\}\).

Maintain a counter for every length and a predecessor array, initially
marking only \(0\) as reachable. Sweep the starts \(i=0,\ldots,k-1\),
processing the events at each start, including unreachable starts.
At a reachable \(i\), inspect all \(1\le d\le k-i\).
When its counter is zero, mark \(i+d\) reachable and store its first
predecessor. There are at most \(k(k+1)/2\) arc inspections.
If \(k\) becomes reachable, its predecessor path constructs the
required crossing-block concatenation.

Otherwise Lemma~\ref{lem:equality} applies: \(|D|=k\) and
\(D\) has the form in Equation~\eqref{eq:cutform}. Read \(t\) as the least reachable
positive interior vertex, with \(t=k\) if there is none.
Bucket the smaller endpoints of the deletions by length.
Each bucket has at most two entries by Equation~\eqref{eq:cutform}, giving
constant worst-case-time edge membership tests.

For an internal split, test \(P_p\) and \(\rho(P_p)\) for
\(1\le p\le k-1\), one matching at a time.
Section~\ref{sec:classification} shows that either a test avoids \(D\)
or \(D=C_{t-1}\). The tests cost \(O(k^2)\) time in total.

For an end split, reflect if necessary and read the unique deletion
of each length in increasing order. Maintain the current full fan
or the fan size and last sequence entry in Equation~\eqref{eq:left}.
If the first invalid extension occurs on \([2q]\), the corresponding
witness from Lemma~\ref{lem:extension} avoids the prefix deletions.
Append
\((2q+1)(2q+2),\ldots,(2k-1)(2k)\).
The remaining deletions have lengths greater than \(q\) and cannot
occur in the prefix witness. The appended pairs avoid the sole
length-one deletion. This gives the required matching.
If every extension is valid, its parameters describe a left blocker.
Reflect the answer back when needed.

The event sweep, membership buckets, predecessors and one candidate
matching occupy \(O(k)\) words. The prefix scan takes \(O(k)\) time.
All indices and counters use \(O(\log k)\) bits.
For \(k=1\), the answer is the blocker \(\{12\}\) or the matching
\(\{12\}\), according to whether that edge is deleted.
\end{proof}

Algorithm~\ref{alg:certificate} summarizes the construction. Its end-split
scan uses the explicit witnesses in Lemma~\ref{lem:extension}; the proof
above describes their completion and the representation of the parameters.
\begin{algorithm}[H]
\caption{Construct a certificate for \(D\)}\label{alg:certificate}
\begin{algorithmic}[1]
\Require \(k\ge1\); an edge set \(D\) on \([2k]\) with \(|D|\le k\)
\Ensure A minimum-blocker description or an avoiding nonnested perfect matching
\If{\(k=1\)}
  \State \Return blocker \(\{12\}\) if \(12\in D\), otherwise matching \(\{12\}\)
\EndIf
\State Bucket the events from Equation~\eqref{eq:events}; set all length counters to zero
\State Mark only \(0\) reachable; initialize the predecessor array
\For{\(i=0,\ldots,k-1\)}
  \State Apply every event at \(i\) to its length counter
  \If{\(i\) is reachable}
    \For{\(d=1,\ldots,k-i\) with length counter zero}
      \State Mark \(i+d\) reachable and store its first predecessor
    \EndFor
  \EndIf
\EndFor
\If{\(k\) is reachable}
  \State \Return the matching formed by the predecessor-path blocks \(B_{ij}\)
\EndIf
\State Recover \(t\) and bucket deletions by length as in Equation~\eqref{eq:cutform}
\If{\(1<t<k\)}
  \For{\(M=P_1,\rho(P_1),\ldots,P_{k-1},\rho(P_{k-1})\)}
    \If{\(M\cap D=\varnothing\)} \State \Return \(M\) \EndIf
  \EndFor
  \State \Return the mixed-blocker description \(C_{t-1}\)
\Else
  \State Reflect if \(t=1\), then scan the prefix extensions in increasing length
  \If{the first invalid extension has length \(q\)}
    \State Construct its witness on \([2q]\) from Lemma~\ref{lem:extension}
    \State \Return the witness completed by consecutive pairs, reflecting back if needed
  \EndIf
  \State \Return the recovered left-blocker parameters, reflecting back if needed
\EndIf
\end{algorithmic}
\end{algorithm}
The deletion-budget hypothesis is essential to this procedure:
the equality form, and therefore the two-entry length buckets, follows
only after failed reachability with \(|D|\le k\).

\section{Beyond the perfect case}\label{sec:beyond}

The equality at \(n=2k\) has an immediate boundary: one additional
vertex permits more than \((k-1)n\) edges when \(k\ge5\).

\begin{proposition}\label{prop:construction}
For every \(k\ge5\) and \(n\ge2k+1\),
\[
 f_k(n)\ge(k-1)n+1.
\]
In particular, the equality proposed in
\cite[Conjecture~2.6]{BFT2025} fails throughout this range.
\end{proposition}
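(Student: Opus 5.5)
The plan is to reduce to the base case \(n=2k+1\) and then pad by a vertex-extension step adding \(k-1\) edges per new vertex. Throughout I work with complements: a graph on \([n]\) with \(\binom n2-|D|\) edges has no nonnested \(k\)-matching exactly when the deleted set \(D\) meets every nonnested \(k\)-matching on \([n]\).

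For \(n=2k+1\) the target \((k-1)(2k+1)+1\) edges corresponds to \(|D|=2k\), since \(\binom{2k+1}{2}=2k^2+k\). A nonnested \(k\)-matching on \([2k+1]\) misses exactly one vertex \(v\). After deleting \(v\) and relabelling order-preservingly, it becomes a member of \(\NN\) on \([2k]\). So \(D\) is a blocker if and only if, for every \(v\in[2k+1]\), the set \(D-v\) (edges not incident with \(v\), relabelled) blocks \(\NN\). By Theorem~\ref{thm:main}, this holds if and only if each \(D-v\) contains one of the listed sets \(L(m,a)\), \(\rho(L(m,a))\) or \(C_m\). Each \(D-v\) then needs at least \(k\) edges, so \(2k\) deletions leave a slack of only \(k\) edges that may touch any given vertex.

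The main obstacle is designing \(D\), and I would build it from the three families. Start from a left fan at \(1\) and a right fan at \(2k+1\). Deleting \(1\) or \(2k+1\) then leaves a full fan, which is a blocker. For an interior \(v\), the relabelled remainder must still contain some \(L(m,a)\) or \(C_m\). My first attempt is to shrink both fans to \(m+1\) edges and spend the freed budget on a few length-\(k\) and length-\((k\pm1)\) edges through the middle, arranged so that removing any interior \(v\) leaves a mixed blocker \(C_m\) after the index shift. The removal of \(v\) shifts all lengths across \(v\) by one, which is why the middle edges should appear in both lengths \(k\) and \(k+1\). I expect a free parameter to appear that allows \(2k\) edges only once \(k\ge5\), since small \(k\) presumably fail. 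I would first search small cases (\(k=5\), \(n=11\)) by computer using the certificate algorithm of Theorem~\ref{thm:algorithm} applied to each \(D-v\), and then generalise the found pattern. Verification is then mechanical: for each \(v\), exhibit the contained member of the classification.

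For \(n>2k+1\), I would prove \(f_k(n+1)\ge f_k(n)+(k-1)\) by an extension preserving the property. The first candidate is to insert a new vertex \(w\) adjacent to exactly \(k-1\) suitably chosen vertices. These should be chosen so that any nonnested \(k\)-matching using an edge at \(w\) can be rerouted to one avoiding \(w\), contradicting the hypothesis on the smaller graph. Two placements seem natural: \(w\) as a twin of an end vertex whose fan in \(D\) is already saturated, or \(w\) inserted in the middle with its neighbours lying inside a block where any use of \(w\) forces a nesting. If a generic extension lemma is not available, I would instead give the explicit family directly for all \(n\). In that case I would pad the \(n=2k+1\) construction with extra vertices of degree exactly \(k-1\), and verify by the same vertex-deletion reduction applied \(n-2k\) times.
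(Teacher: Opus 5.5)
There is a genuine gap. The proposition is a pure existence statement, and its whole content is an explicit set of \(2k\) deletions on \([2k+1]\) that meets every nonnested \(k\)-matching, for every \(k\ge5\). Your proposal never produces such a set. It hands the design to a computer search at \(k=5\) followed by an unspecified generalisation, so nothing is yet proved.

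The sketched design is also not yet coherent. If both fans are shortened to free budget for middle edges, then deleting \(1\) or \(2k+1\) no longer leaves a full fan, so the middle edges must also cover those two cases. Two genuinely full fans cannot work either: they already use all \(2k\) deletions, and \(D-(k+1)\) is then avoided by the crossing matching \(\{i(i+k)\}\) on the relabelled vertices. The missing construction is
\[
D_k=\{1j:2\le j\le k-1\}\cup\{j(2k+1):k+3\le j\le 2k\}\cup\{ij:i\in\{3,4\},\ j\in\{k+3,k+4\}\}.
\]
It consists of two fans of \(k-2\) edges each, plus a \(2\times2\) rectangle of middle edges with lengths \(k-1,k,k,k+1\).

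Once \(D_k\) is written down, your vertex-deletion reduction does verify it. This would be a legitimate alternative to the paper, which instead argues directly by cases on the unused vertex. It reads off forced endpoints, for example \(x_3\in\{3,4\}\) and \(y_3\in\{k+3,k+4\}\) when both ends are used, and never invokes Theorem~\ref{thm:main}. In your route, after order-preserving relabelling:
\begin{itemize}
\item \(D_k-v\supseteq C_2\) for every \(2\le v\le 2k\); the cases \(v\in\{2,3\}\) need \(14\in D_k\), i.e.\ \(k\ge5\);
\item \(D_k-(2k+1)\supseteq L(k-2,(4,4))\);
\item \(D_k-1\supseteq\rho(L(k-2,(k-1,k-1)))\).
\end{itemize}

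Only the direction ``contains a listed set \(\Rightarrow\) blocks'' is available to you. Your ``if and only if'' is false, because an inclusion-minimal blocker of \(\NN\) can have more than \(k\) edges and then contains no minimum blocker. For \(k=3\), the set \(\{14,24,34,46\}\) is such a blocker.

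Your extension step is likewise only a plan, but no rerouting is needed. Add a new last vertex \(w\) adjacent exactly to \(1,\dots,k-1\). If an edge \(jw\) lay in a nonnested \(k\)-matching, then \(y_k=w\) and \(x_k=j\le k-1\), whereas \(x_1<\cdots<x_k\) forces \(x_k\ge k\). So the new edges are never used, and \(f_k(n+1)\ge f_k(n)+k-1\).
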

\begin{proof}
First take \(n=2k+1\). Delete from the complete ordered graph the set
\begin{align*}
 D_k={}&\{1j:2\le j\le k-1\}
 \cup\{j(2k+1):k+3\le j\le2k\}\\
 &\cup\{ij:i\in\{3,4\},\ j\in\{k+3,k+4\}\}.
\end{align*}
The three parts are disjoint and contain \(k-2\), \(k-2\), and four
edges, respectively. The remaining graph has
\(\binom{2k+1}{2}-2k=(k-1)(2k+1)+1\) edges.

Suppose it contains a nonnested matching \(x_i y_i\), \(1\le i\le k\),
and let \(u\) be the unique unused vertex. If vertex \(1\) is used,
then \(x_1=1\) and \(y_1\ge k\). Every used vertex of \([1,k-1]\)
is consequently a left endpoint. If vertex \(2k+1\) is used,
then \(y_k=2k+1\) and \(x_k\le k+2\), so every used vertex of
\([k+3,2k+1]\) is a right endpoint.

If both end vertices are used, at least three of \(1,2,3,4\) are
used left endpoints, giving \(x_3\in\{3,4\}\).
At least \(k-2\) right endpoints lie in \([k+3,2k+1]\), and only
\(k-3\) vertices lie after \(k+4\); hence
\(y_3\in\{k+3,k+4\}\).
If \(u=1\), the last \(k-1\) vertices are right endpoints, so
\(y_2=k+3\); among the first three used vertices \(2,3,4\), at most
one is a right endpoint, giving \(x_2\in\{3,4\}\).
Finally, if \(u=2k+1\), the first \(k-1\) vertices are left endpoints,
so \(x_4=4\). Exactly one vertex in \([k,2k]\) is a left endpoint,
giving \(y_4\in\{k+3,k+4\}\).
Each case uses an edge in the last part of \(D_k\), a contradiction.

To extend any avoiding graph on \([n]\), add a last vertex joined
to \(1,\ldots,k-1\). A nonnested \(k\)-matching using an edge to
this vertex would have its last left endpoint at most \(k-1\),
leaving too few vertices for the preceding \(k-1\) left endpoints.
Thus each added vertex contributes \(k-1\) edges and preserves
avoidance. This is operation~1 in~\cite[Section~2.2]{BFT2025}.
\end{proof}

Figure~\ref{fig:counterexample} displays the construction for \(k=5\).
The two endpoint fans force every candidate matching to use one of
the four deleted edges between \(\{3,4\}\) and \(\{8,9\}\).
The graph \(K_{11}-D_5\) therefore has 45 edges and no nonnested matching of
size five, exceeding the conjectured value \(4\cdot11=44\).

\begin{figure}[t]
\centering
\input{figures/eleven_vertex_counterexample.tex}
\caption{A counterexample on eleven ordered vertices.
Panel (a) shows all ten deleted edges of \(D_5\), with vertices ordered
from left to right. The six solid edges
form the endpoint fans, and the four dashed edges join
\(\{3,4\}\) to \(\{8,9\}\).
In panel (b), \(u\) is the unused vertex of a hypothetical nonnested
five-edge matching that avoids both fans; its endpoints are indexed in
increasing order. Each case forces a dashed edge, so the complement
\(K_{11}-D_5\) has 45 edges and contains no such matching.}
\label{fig:counterexample}
\end{figure}

We next use the perfect-blocker classification for an upper bound.
For a deletion set \(D\), write \(\Delta(D)\) for the maximum degree
of the graph with edge set \(D\).

\begin{lemma}\label{lem:degree-two}
If \(k\ge2\) and \(D\) is an edge set on \([2k+1]\) with
\(\Delta(D)\le2\), then its complement contains a nonnested
matching of size \(k\).
\end{lemma}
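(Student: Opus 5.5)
The plan is to reduce to the perfect case on \(2k\) vertices by deleting one vertex \(v\in[2k+1]\). We then show that for a suitable \(v\) the relabelled set \(D_v=D-v\) on \([2k]\) does not meet every member of \(\NN\) (or already of \(\PP\)). Throughout, the ordered graph on \([2k+1]\setminus\{v\}\) is identified with \([2k]\) by the order-preserving bijection. A nonnested perfect matching there is a nonnested \(k\)-matching on \([2k+1]\) avoiding \(D\).

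\emph{Step 1 (the \(\PP\) sieve).} For each candidate \(v\), run the interval-cut argument of Section~\ref{sec:cuts} on \(D_v\). By Lemma~\ref{lem:tree}, if \(D_v\) blocks \(\PP\), it contains \(k\) deletions lying in pairwise edge-disjoint crossing blocks \(B_{ij}\), one of which is \(B_{0k}\). In particular \(D_v\) contains an edge of length exactly \(k\) in the relabelled order.

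I would compare the two choices \(v=1\) and \(v=2k+1\). They leave the lengths of edges avoiding \(v\) unchanged, and the edges of length \(k\) available are \(a(a+k)\) with \(2\le a\le k+1\), respectively \(1\le a\le k\). The choice \(v=k+1\) instead lengthens nothing on the left and shortens every edge crossing \(k+1\) by one. Lengths \(k+1\) in \(D\) then become lengths \(k\) in \(D_{k+1}\).

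Since \(\Delta(D)\le2\), the edges of \(D\) form paths and cycles, and \(|D|\le 2k+1\). The goal of this step is a counting or averaging statement: summed over a small set of choices of \(v\) (say \(v\in\{1,k+1,2k+1\}\) and the vertices adjacent to \(1\) and \(2k+1\) in \(D\)), the required disjoint-block structure would need more deletions, or more edges at a vertex, than \(D\) supplies.

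\emph{Step 2 (use the classification when \(|D_v|=k\)).} If for some \(v\) we get \(|D_v|\le k\), Theorem~\ref{thm:main} applies directly. \(D_v\) would have to be some \(L(m,a)\), \(\rho(L(m,a))\) or \(C_m\). Each of these contains a full endpoint fan of size \(m\), respectively \(k-m\), at vertex \(1\) or \(2k\), together with the depth-two tree structure recorded after Proposition~\ref{prop:coblocker}.

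With \(\Delta\le2\) and \(k\ge3\), the only survivors are the small cases \(m\le2\) for left blockers (and reflections), and \(C_m\) with \(m\le2\) and \(k-m\le2\). This forces \(k\le4\). The corresponding \(a\)-sequences are then tightly restricted, since vertex \(2\) and the fan vertices already carry degree. For these few shapes I would switch to a different \(v\), typically the other end vertex or the unique fan vertex, and exhibit an avoiding matching explicitly. This could be a shifted \(P_p\) from Equation~\eqref{eq:witness}, or the witness from Lemma~\ref{lem:extension}, on the remaining \(2k\) vertices. The cases \(k=2\) are checked by hand.

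\emph{Step 3 (the case \(|D_v|>k\) for every reasonable \(v\)).} This is the main obstacle, because Theorem~\ref{thm:main} says nothing about blockers with more than \(k\) edges, and an inclusion-minimal blocker may be larger. I would try first to choose \(v\) of degree two in \(D\) so that \(|D_v|=|D|-2\), and iterate the interval-cut reachability directly. In the auxiliary digraph on \(\{0,\dots,k\}\) I would exhibit a surviving path from \(0\) to \(k\) by a greedy choice of short arcs \(i\to i+1\) (blocks \(\{(2i+1)(2i+2)\}\)) and arcs \(i\to i+2\) (blocks \(\{(2i+1)(2i+3),(2i+2)(2i+4)\}\)). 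Each length-one or length-two deletion kills only \(O(1)\) arcs, and degree at most two limits how many consecutive short arcs can be killed simultaneously.

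The freedom in \(v\) shifts the parity of positions past \(v\). This lets us move a blocked consecutive pair \((2i+1)(2i+2)\) to a different block. I expect the delicate part to be showing that a long run of deleted short edges, which under \(\Delta\le2\) looks like a path \(1,2,3,\dots\), can always be broken by a single well-placed removed vertex. Once it is broken, the crossing blocks on the two sides concatenate into a member of \(\PP\) avoiding \(D_v\).
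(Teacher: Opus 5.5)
\paragraph{The gap.} Your proposal does not prove the lemma, because the case that carries all the content is left as a heuristic. With \(\Delta(D)\le2\) on \([2k+1]\), \(D\) can have up to \(2k+1\) edges, and deleting a vertex removes at most two of them. So \(|D-v|>k\) for every \(v\) as soon as \(|D|\ge k+3\). In that range Theorem~\ref{thm:main} and Lemma~\ref{lem:equality} say nothing, and Lemma~\ref{lem:tree} gives no contradiction. Hence Step~2 covers only \(|D|\le k+2\), the averaging statement of Step~1 is never formulated, and everything rests on Step~3.

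The mechanism you propose in Step~3 already fails for \(k=3\). Take \(D=\{12,13,24,46,57,67\}\), the path \(3,1,2,4,6,7,5\) on \([7]\). Then \(|D-v|\ge4>k\) for every \(v\). For each of the seven choices of \(v\), the three concatenations built only from arcs of lengths one and two all meet \(D\); a direct check shows each contains \(12\), \(13\), \(46\) or \(67\). The surviving members of \(\PP\) must use the full block \(B_{03}\), e.g.\ \(\{14,25,36\}\) for \(v=7\). So no greedy choice of short arcs, for any \(v\), can succeed, and you give no other argument that some \(v\) always leaves a \(0\)-to-\(k\) path. There is also a smaller slip in Step~2: the right fan of \(C_m\) has \(k-m-1\) edges, so \(\Delta\le2\) allows \(k=5\), as for \(C_2\) in Figure~\ref{fig:families}.

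\paragraph{The missing idea.} No reduction to the perfect case is needed; the paper inducts on \(k\) by peeling off an edge at vertex \(1\).
\begin{itemize}
\item If \(12\) or \(13\) lies outside \(D\), remove its endpoints, apply induction to the remaining \(2k-1\) vertices, and append that edge. Nothing precedes \(1\), and at most one vertex lies strictly inside the edge. So it neither contains nor lies inside an independent edge.
\item Otherwise \(12,13\in D\), and \(\deg_D(1)\le2\) makes \(14\) available. Remove \(1\) and \(4\), and add \(23\) to the deletion set. Vertices \(2\) and \(3\) each lost their deleted edge to \(1\), so the maximum degree is still at most two. The inductive matching therefore avoids \(23\), which is the only independent edge \(14\) could nest, so \(14\) can be appended.
\end{itemize}
The base case on five vertices is a short finite check. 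This argument handles every size of \(D\) uniformly and never uses the classification.
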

\begin{proof}
Call an edge available when it is outside \(D\).
For the base case on five vertices, suppose first that \(12\) is available.
Combine it with any available edge on \(\{3,4,5\}\). If there is none,
the degree bound makes \(13\) and \(24\) available.
If \(12\in D\) and \(13\) is available, combine \(13\) with an
available edge on \(\{2,4,5\}\); if that set has none, use \(14,35\).
Finally, if \(12,13\in D\), then \(14\) is available. Combine it
with \(25\) or \(35\), if either is available. If both are in \(D\),
then \(23,45\) are available. Every indicated pair is nonnested.

For \(2k+1\ge7\), if \(12\) or \(13\) is available, remove its
endpoints, apply induction, and append that edge. Neither edge can
nest an independent edge. In the remaining case, \(12,13\in D\)
and \(14\) is available. Remove \(1,4\) and add \(23\) to the
remaining deletion set. Vertices \(2,3\) each lost a deleted edge to
\(1\), so the new deletion set still has maximum degree at most two.
Induction in the inherited order gives an avoiding matching that omits
\(23\). Appending \(14\) is nonnested, since its only possible
independent inner edge would be \(23\).
\end{proof}

\begin{proposition}\label{prop:extra-upper}
For \(k\ge2\),
\[
 f_k(2k+1)\le2k^2-3.
\]
For \(k\ge10\), this improves to
\[
 f_k(2k+1)\le2k^2-4.
\]
Consequently, for \(k\ge10\) and \(n\ge2k+1\),
\[
 f_k(n)\le
 \left\lfloor\frac{(2k^2-4)n(n-1)}{2k(2k+1)}\right\rfloor.
\]
\end{proposition}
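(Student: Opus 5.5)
The plan is to work with deletion sets. The complete ordered graph on \([2k+1]\) has \(k(2k+1)=2k^2+k\) edges. So the bound \(f_k(2k+1)\le 2k^2-3\) is equivalent to the following claim: every edge set \(D\) on \([2k+1]\) with \(|D|\le k+2\) leaves a nonnested \(k\)-matching in its complement. The proof splits on the maximum degree \(\Delta(D)\).

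If \(\Delta(D)\le2\), Lemma~\ref{lem:degree-two} applies directly. Otherwise some vertex \(v\) has \(\deg_D(v)\ge3\). Delete \(v\). The remaining \(2k\) vertices inherit a linear order, and the deletions among them number at most \(|D|-3\le k-1\). By Theorem~\ref{thm:main}, fewer than \(k\) edges cannot block \(\NN\) on \(2k\) ordered vertices. Hence the complement contains a nonnested perfect matching of the \(2k\) remaining vertices, which is a nonnested \(k\)-matching in the original graph. This proves the first bound.

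For \(k\ge10\) we must also rule out blocking sets with \(|D|=k+3\). The same argument disposes of \(\Delta(D)\ge4\) and of \(\Delta(D)\le2\). What remains is the case where every vertex \(v\) of \(D\)-degree \(3\) leaves exactly \(k\) deletions \(D-v\) on \([2k+1]\setminus\{v\}\). Each such \(D-v\) must then, after relabeling, be one of the minimum blockers \(L(m,a)\), \(\rho(L(m,a))\) or \(C_m\).

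I would exploit the explicit structure of these blockers recorded in Section~\ref{sec:consequences}. Each is a tree on \(k+1\) vertices with one or two fan centres of large degree. The degree sum of \(D\) is \(2k+6\), while \(D-v\) is a tree on only \(k+1\) vertices. Adding back three edges at \(v\) creates further vertices of degree \(\ge3\), typically the fan centre(s), and possibly vertices hit by \(v\)'s edges. For each such vertex \(w\), the set \(D-w\) must again be a minimum blocker of the right shape.

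Comparing the two descriptions should give the contradiction. Removing a fan centre destroys the fan, and the remaining edges, which have distinct lengths and depth-two structure, cannot form a fan of size \(m'\) plus a weakly increasing tail in the relabeled order. Alternatively, I would explicitly exhibit an avoiding matching, such as an adaptation of \(P_p\) or of the witnesses in Lemma~\ref{lem:extension} skipping \(v\). The hypothesis \(k\ge10\) should enter when counting: the three edges at \(v\) cannot simultaneously repair all length and endpoint constraints once the fans are long. This case analysis is the main obstacle. I would first try to show that the fan centre \(1\) (or \(2k+1\)) has degree \(\ge3\) in \(D\), and that \(D\) minus it has too many short edges to fit any family.

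Finally, the general bound follows by averaging. Let \(G\) be an ordered graph on \([n]\) with no nonnested \(k\)-matching. Every \((2k+1)\)-subset induces an ordered graph with the same property, so it has at most \(2k^2-4\) edges. Each edge of \(G\) lies in \(\binom{n-2}{2k-1}\) such subsets. Double counting gives
\[
|E(G)|\binom{n-2}{2k-1}\le(2k^2-4)\binom{n}{2k+1},
\]
that is, \(|E(G)|\le(2k^2-4)n(n-1)/(2k(2k+1))\). Integrality gives the floor.
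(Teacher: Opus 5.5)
Your proof of the first inequality and your averaging step are correct and match the paper's argument. The paper phrases the first part as $|D|-\deg_D(v)\ge k$ for every $v$, with Lemma~\ref{lem:degree-two} supplying a vertex of degree at least three. The gap is in the improvement for $k\ge10$. You correctly reduce to $|D|=k+3$ and $\Delta(D)=3$, with $D-v$ a minimum perfect blocker for a degree-three vertex $v$. But you then leave the decisive step as an unspecified case analysis, either comparing $D-v$ with $D-w$ or adapting $P_p$ and the witnesses of Lemma~\ref{lem:extension}. You also only guess at where $k\ge10$ enters. As written, the bound $2k^2-4$ is not established, and neither is the displayed bound on $f_k(n)$, which depends on it.

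The missing observation makes that case analysis unnecessary. Since $D-v\subseteq D$, the minimum blocker $D-v$ itself has maximum degree at most $3$, and for $k\ge10$ no member of the three families has this property. In $L(m,a)$, vertex $1$ has degree $m$. The nonfan right endpoints $a_j+j$ are distinct and at least $m+3$, so a fan neighbour $i\in[2,m+1]$ has degree $1+|\{j:a_j=i\}|$. Maximum degree at most $3$ therefore forces $m\le3$ and $k-m\le2m$, hence $k\le9$; the reflected family is symmetric. In $C_m$ the outer vertices $1$ and $2k$ have degrees $m$ and $k-m-1$, which forces $k\le7$. This is exactly where $k\ge10$ is used: for $k=9$ the blocker $L(3,a)$ with $a=(2,2,3,3,4,4)$ has maximum degree $3$, so the degree count alone gives no contradiction. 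Your suggested first step, showing that the fan centre $1$ carries the large degree, is false in general. In $L(1,a)=\{12\}\cup\{2(2+j):2\le j\le k\}$, vertex $1$ has degree $1$ while vertex $2$ has degree $k$, which is why the count must run over the fan neighbours.
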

\begin{proof}
Let \(D\) block every nonnested \(k\)-matching on \([2k+1]\).
For every vertex \(v\), the restriction \(D-v\) blocks all nonnested
perfect matchings in the inherited order. Theorem~\ref{thm:main} gives
\(|D|-\deg_D(v)\ge k\). Lemma~\ref{lem:degree-two} gives a vertex
of degree at least three, so \(|D|\ge k+3\).

Now suppose \(k\ge10\) and \(|D|=k+3\). Then \(\Delta(D)\le3\), and deleting a
degree-three vertex gives a minimum perfect blocker with maximum
degree at most three. For a left blocker \(L(m,a)\), we have \(m\le3\),
and each of its \(m\) fan neighbours supports at most two additional
edges. Thus \(k\le3m\le9\). Reflection gives the same bound.
For a mixed blocker, the two outer degrees are \(m\) and \(k-m-1\),
so \(k\le7\). Both alternatives contradict \(k\ge10\).
Therefore \(|D|\ge k+4\), which proves the improved bound.

Apply this bound to every induced subgraph of order \(2k+1\).
Each edge is counted \(\binom{n-2}{2k-1}\) times among
\(\binom{n}{2k+1}\) such subgraphs. Double counting and integrality
give the final inequality.
\end{proof}

Propositions~\ref{prop:construction} and~\ref{prop:extra-upper} give
two-sided bounds beyond the perfect boundary. Determining the exact
value of \(f_k(2k+1)\) for general \(k\), and classifying its extremal
graphs, remain further problems.

\section{An embedding into matrix blockers}\label{sec:matrix}

A \emph{\(123\)-blocker of order \(N\)} is a set of positions in
\([N]\times[N]\) meeting every permutation matrix whose permutation
has no increasing subsequence of length three. This is the
minimum-blocker problem classified in~\cite{BC2023}.
The ordinary formulation includes full rows and columns as blockers.
Restrictions such as a nonzero permanent, considered in
Brualdi, Cao and Goldwasser~\cite{BCG2026}, define related problems.

Recall a simple lower bound from the cyclic-Hankel
construction~\cite[Lemma~2.2]{BC2023}.
Using indices \(0,\ldots,N-1\), the \(N\) sets
\(\{(r,s-r\bmod N):0\le r<N\}\) partition all positions
into \(123\)-avoiding permutations, each consisting of at most two
decreasing runs. Thus a blocker has at least \(N\) positions.
A full row attains this bound. Since the \(N\) rows are disjoint
minimum blockers, a transversal of all minimum matrix blockers needs
at least \(N\) positions; any fixed avoiding permutation attains
this bound. Together with Proposition~\ref{prop:coblocker},
this also distinguishes the full matrix and nonnested incidence
problems whenever their parameters are at least three.

\begin{proposition}\label{prop:matrix}
Put \(N=2k\), \(r(i)=N+1-i\), and write a left blocker as
\[
 D=\{a_d(a_d+d):1\le d\le k\},\qquad a_d=1\quad(d\le m).
\]
Then
\begin{align}
Z(D)={}&\{(1,N)\}
 \cup\{(a_d,r(a_d+d)),(a_d+d,r(a_d)):1\le d<k\}\notag\\
 &\cup\{(a_k,r(a_k+k))\}\label{eq:embedding}
\end{align}
is a minimum \(123\)-blocker of order \(N\).
The map \(D\mapsto Z(D)\) is injective.
\end{proposition}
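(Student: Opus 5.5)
The plan is to translate everything into $321$-avoiding permutations by reversing columns, and then rerun the fixed-point argument from the sufficiency proof of Theorem~\ref{thm:main}. The proof has three parts: counting and distinctness, the blocking property, and injectivity.

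\emph{Translation and counting.} For a permutation $\sigma$ of $[N]$, put $\tau=r\circ\sigma$. Then $\sigma$ avoids $123$ exactly when $\tau$ avoids $321$. The matrix of $\sigma$ contains $(a,r(b))$ exactly when $\tau(a)=b$. Read this way, $Z(D)$ says that $\tau$ must realize at least one of the following:
\begin{itemize}
\item the fixed point $\tau(1)=1$;
\item $\tau(a_d)=a_d+d$ or $\tau(a_d+d)=a_d$ for some $d<k$ (both orientations of each deleted edge of length below $k$);
\item the single orientation $\tau(a_k)=a_k+k$.
\end{itemize}
First I check that these $2k$ positions are distinct. Positions coming from edges of different lengths $d$ differ in $|{\rm row}-r^{-1}({\rm col})|$, which equals $d\ge1$ for them and $0$ for $(1,N)$. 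Within one length the two orientations differ, since one lies above the anti-diagonal and the other below. So $|Z(D)|=N$. By the cyclic-Hankel lower bound, it then suffices to show that $Z(D)$ is a blocker.

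\emph{Blocking.} Suppose a $321$-avoiding $\tau$ avoids all these relations. I use the standard structure: the excedance values $\tau(i)>i$ increase with $i$, and so do the remaining values. Since $\tau(1)\ne1$, the fan relations force $\tau(1)\ge m+2$ and $\tau^{-1}(1)\ge m+2$ whenever $m<k$. For $m=k$ I must check separately that the one-sided entry $(1,r(k+1))$, together with the two-sided ones, already gives a contradiction, presumably via a pigeonhole count on the first $k+1$ rows.

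For $m<k$, I mimic the proof of sufficiency in Theorem~\ref{thm:main}. Value $1$ sits in a row $\ge m+2$, so no non-excedance can occur among rows $2,\ldots,m+1$: any such non-excedance value would exceed $1$ while appearing above it, creating a $321$ with a larger earlier value. Hence rows $1,\ldots,m+1$ are excedances. Set $h_i=\tau(i)-i$ for $i\le m+1$. I then need monotonicity $h_1\le\cdots\le h_{m+1}$ and the bound $h_i\le k$; the increase of excedance values gives only weak growth of $\tau(i)$, so I will need a separate argument for the $h_i$.

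The length bound is the delicate point. I expect to get it from a counting argument: an excedance of length $>k$ starting in $[1,m+1]$ would force too many large values into early rows. If it still fails, the two-sided relations $\tau(a_d+d)=a_d$ should serve to block the descending excursions instead. Once $h$ is a weakly increasing map into $[m+1,k]$, the map $g(j)=h_{a_j}$ has a fixed point $j$. That gives $\tau(a_j)=a_j+j$, a contradiction for $j<k$ and also for $j=k$, since the $j=k$ orientation is exactly the one retained in $Z(D)$.

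\emph{Injectivity.} I recover $D$ from $Z(D)$. Every position other than $(1,N)$ determines an unordered pair $\{a,b\}$ with $b=r^{-1}({\rm col})$, and this pair is an edge of $D$. Conversely, every edge of $D$ arises this way, so $D$ is the set of these pairs. Hence $D\mapsto Z(D)$ is injective.

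The main obstacle is the blocking step: obtaining monotonicity of the $h_i$ and the bound $h_i\le k$ in the permutation setting, where vertices are not forced to be used as only left or only right endpoints.
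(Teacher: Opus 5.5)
The translation to $321$-avoiding $\tau$, the distinctness count, the appeal to the cyclic-Hankel bound and your injectivity argument are all fine. You recover $D$ as the set of pairs $\{a,r^{-1}(\mathrm{col})\}$ with $a\ne r^{-1}(\mathrm{col})$, which is equivalent in substance to the paper's recovery via Hankel classes. Two of your worries are not real. First, since $\tau(1)<\cdots<\tau(m+1)$ are distinct integers, $h_{i+1}\ge h_i$ is immediate. Second, for $m=k$, put $y=\tau(1)\ge k+2$ and $x=\tau^{-1}(1)\ge k+1$. None of the $y-2$ values in $(1,y)$ may occupy a position in $(1,x)$, so $y-2\le N-x$, contradicting $x+y\ge N+3$.

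The genuine gap is the bound $h_i\le k$, on which your fixed-point step rests. You leave it unproved, and your first suggested route, a count showing that a long excedance forces too many large values into early rows, cannot succeed without the reverse relations. The bound does not follow from $321$-avoidance together with the fan, or even together with all forward relations. Take $k=3$, $m=1$, $D=\{12,24,25\}$ and $\tau=561234$. This $\tau$ avoids $321$, has $\tau(1),\tau^{-1}(1)\ge 3$, and violates no forward relation, yet $h_1=h_2=4>k$. It is excluded only by the reverse relation $\tau(4)=2$.

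The paper proves no length bound; it uses a dichotomy instead.
\begin{itemize}
\item Iterate $j\mapsto h_{a_j}$ from $m+1$. Either the iteration stays in $[m+1,k]$ and produces a forbidden forward fixed point, or it exits. By monotonicity and $a_j\le a_k=c$, an exit gives $h_c\ge k+1$.
\item Run the same iteration for $\tau^{-1}$, with $v_i=\tau^{-1}(i)-i$ on $[m+1,k-1]$. Here the fixed points are exactly the forbidden reverse relations $\tau(a_d+d)=a_d$, so you get $v_b\ge k$ for $b=a_{k-1}$. If $m=k-1$, take $b=1$ and use $v_1\ge k$.
\item Put $x=\tau^{-1}(b)$ and $y=\tau(c)$. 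Then $x>c$ and $x+y\ge N+b+c+1$.
\item $321$-avoidance keeps the $y-b-1$ values strictly between $b$ and $y$ out of the positions strictly between $c$ and $x$. Hence $y-b-1\le N-x+c-1$, that is, $x+y\le N+b+c$, a contradiction.
\end{itemize}
This two-sided excursion count is the missing idea. It is the step where the single retained orientation at length $k$ and the two-sided entries below $k$ are used together.
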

\begin{proof}
The construction has \(N\) distinct positions.
For \(d<k\), the paired positions have cyclic-Hankel classes
\(N-d\) and \(d\), using the residue of the row index plus the
column index minus one, with residue zero represented by \(N\).
The remaining positions have classes \(N\) and \(k\).
It remains to prove blocking.

Suppose a surviving \(123\)-avoiding permutation exists.
Reversing its values gives a \(321\)-avoiding permutation \(\pi\)
that avoids \((1,1)\), both orientations of every deletion of
length below \(k\), and the forward orientation at length \(k\).

First assume \(m<k\). The fan gives
\(\pi(1)>m+1\) and \(\pi^{-1}(1)>m+1\).
The first \(m+1\) images are increasing: a descent followed by
the later value \(1\) would give \(321\). They all exceed \(m+1\).
The same applies to the inverse permutation, which also avoids \(321\).
Hence
\[
 h_i=\pi(i)-i,\qquad v_i=\pi^{-1}(i)-i\qquad(1\le i\le m+1)
\]
are weakly increasing and at least \(m+1\).

Put \(c=a_k\) and \(b=a_{k-1}\), so \(b\le c\).
Iterate \(j\mapsto h_{a_j}\) from \(m+1\).
If the iteration stayed in \([m+1,k]\), monotonicity would force a
fixed point, which is a forbidden forward deletion.
It must exit above \(k\); since \(a_j\le c\) before exit,
\(h_c\ge k+1\).
The inverse iteration through length \(k-1\) similarly gives
\(v_b\ge k\). If \(m=k-1\), the latter interval is empty,
but \(b=1\) and \(v_1\ge m+1=k\) gives the same conclusion.

Set \(x=\pi^{-1}(b)\) and \(y=\pi(c)\).
We have \(x>c\) and
\[
 x+y\ge N+b+c+1.
\]
No position strictly between \(c\) and \(x\) can contain a value
strictly between \(b\) and \(y\), since these three positions would
give \(321\).
The \(y-b-1\) intermediate values must fit into the \(N-x+c-1\)
positions outside \([c,x]\). This implies \(x+y\le N+b+c\),
a contradiction.

For the full fan \(m=k\), avoidance gives
\(y=\pi(1)\ge k+2\) and \(x=\pi^{-1}(1)\ge k+1\).
The same counting argument with \(b=c=1\) gives
\(x+y\le N+2\), again a contradiction.

Thus \(Z(D)\) is a blocker. The cyclic-Hankel lower bound proves its
minimum size. The unique position in class \(N-d\), for \(d<k\),
recovers \(a_d\), and the unique position in class \(k\) recovers
\(a_k\). Therefore the map is injective.
\end{proof}

The embedding applies to the displayed left family and uses its
monotonicity. The necessity argument in Section~\ref{sec:classification}
establishes that every end-split nonnested blocker belongs to this family
after reflection when necessary;
the blocking assertion for \(Z(D)\) is proved above for these forms.

\section{Conclusion and further questions}\label{sec:conclusion}

The minimum blockers of nonnested perfect matchings on \([2k]\) form
three explicit families, with \(2^k+k-2\) members for \(k\ge2\).
The interval-cut method determines both the minimum size and its equality
forms. It also classifies the minimum blockers of crossing-block
concatenations and yields the quadratic-time certificate algorithm.
The injection in Section~\ref{sec:matrix} gives a connection with
minimum \(123\)-blockers for the left family, while the transversal
numbers distinguish the corresponding full incidence problems.

Beyond the perfect case, Proposition~\ref{prop:construction} gives
\((k-1)n+1\) edges without a nonnested \(k\)-matching for all
\(k\ge5\) and \(n\ge2k+1\). Together with the upper bounds in
Proposition~\ref{prop:extra-upper}, this leaves a precise next problem:
determine \(f_k(2k+1)\) and classify the graphs attaining it.
The extension used in the construction preserves the excess
\(|E(G)|-(k-1)|V(G)|\). It would be useful to determine which larger
orders admit constructions with strictly greater excess and what
structure makes such improvements possible.

Two structural questions also arise. For a fixed integer \(s>0\),
can the cut argument decide whether a set of \(k+s\) edges on \([2k]\)
is a blocker and produce an explicit avoiding matching whenever one exists?
In the matrix setting, how can the image of the injection in
Proposition~\ref{prop:matrix} be described within the classification
of Brualdi and Cao~\cite[Theorem~2.15]{BC2023}?
These questions test how far the equality
structure and its monotonicity extend beyond the families classified here.

\bibliographystyle{plain}
\bibliography{references}
\end{document}

%% file: figures/blocker_families.tex
\begin{tikzpicture}[x=0.80cm,y=0.62cm,
  vertex/.style={circle,fill=black,inner sep=1.3pt},
  fan/.style={line width=0.75pt},
  branch/.style={line width=0.85pt,dashed}]
\foreach \yy/\name in {0/{\(L(2;(2,2,3))\)},-3.35/{\(\rho(L(2;(2,2,3)))\)},-6.7/{\(C_2\)}} {
  \node[anchor=east,font=\small] at (0.35,\yy+0.55) {\name};
  \draw[gray!45,line width=0.35pt] (1,\yy)--(10,\yy);
  \foreach \v in {1,...,10} {
    \node[vertex] at (\v,\yy) {};
    \node[below=4pt,font=\small] at (\v,\yy) {\v};
  }
}
\foreach \a/\b in {1/2,1/3} {
 \draw[fan] (\a,0) .. controls (\a,{0.45*(\b-\a)}) and (\b,{0.45*(\b-\a)}) .. (\b,0);
}
\foreach \a/\b in {2/5,2/6,3/8} {
 \draw[branch] (\a,0) .. controls (\a,{0.45*(\b-\a)}) and (\b,{0.45*(\b-\a)}) .. (\b,0);
}
\foreach \a/\b in {9/10,8/10} {
 \draw[fan] (\a,-3.35) .. controls (\a,{-3.35+0.45*(\b-\a)}) and (\b,{-3.35+0.45*(\b-\a)}) .. (\b,-3.35);
}
\foreach \a/\b in {3/8,5/9,6/9} {
 \draw[branch] (\a,-3.35) .. controls (\a,{-3.35+0.45*(\b-\a)}) and (\b,{-3.35+0.45*(\b-\a)}) .. (\b,-3.35);
}
\foreach \a/\b in {1/2,1/3,8/10,9/10} {
 \draw[fan] (\a,-6.7) .. controls (\a,{-6.7+0.45*(\b-\a)}) and (\b,{-6.7+0.45*(\b-\a)}) .. (\b,-6.7);
}
\draw[branch] (3,-6.7) .. controls (3,-4.45) and (8,-4.45) .. (8,-6.7);
\end{tikzpicture}

%% file: figures/eleven_vertex_counterexample.tex
\begin{minipage}{0.98\linewidth}
\centering
\begin{tikzpicture}[x=1.10cm,y=0.95cm,
  vertex/.style={circle,draw=black,fill=white,inner sep=0pt,
    minimum size=4.8mm,font=\small},
  fan/.style={draw=black,line width=0.8pt},
  rectangle edge/.style={draw=blue!55!black,line width=1pt,dashed}]
\node[font=\small] at (5,1.35) {(a) The ten deleted edges \(D_5\)};
\foreach \v in {1,...,11} {
  \node[vertex] (v\v) at ({\v-1},0) {\v};
}
\graph[use existing nodes,edges={fan,bend left=55}] {
  v1 -- {v2,v3,v4};
  {v8,v9,v10} -- v11;
};
\graph[use existing nodes,edges={rectangle edge}] {
  v3 -- [bend right=45] v8;
  v3 -- [bend right=60] v9;
  v4 -- [bend right=25] v8;
  v4 -- [bend right=45] v9;
};
\node[font=\small] at (5,-2.25)
  {\(6\) fan edges \(\,+\,4\) rectangle edges \(\,=\,10\) deletions};
\end{tikzpicture}

\smallskip
{\small (b) A matching avoiding both fans must use a rectangle edge\par}
\smallskip
{\small\renewcommand{\arraystretch}{1.18}
\begin{tabular}{c@{\qquad}c@{\qquad}c}
Unused vertex & Forced left endpoint & Forced right endpoint\\
\hline
\(2\le u\le10\) & \(x_3\in\{3,4\}\) & \(y_3\in\{8,9\}\)\\
\(u=1\) & \(x_2\in\{3,4\}\) & \(y_2=8\)\\
\(u=11\) & \(x_4=4\) & \(y_4\in\{8,9\}\)
\end{tabular}\par}
\medskip
\(\displaystyle |E(K_{11}-D_5)|=\binom{11}{2}-10=45>44=4\cdot11.\)
\end{minipage}